\documentclass[12pt]{amsart}
\usepackage[english]{babel}
\usepackage[T1]{fontenc}
\usepackage[latin1]{inputenc}
\usepackage{lmodern}
\usepackage{a4}
\usepackage[all]{xy}
\usepackage{verbatim}
\usepackage{stmaryrd}
\usepackage{amsfonts}
\usepackage{amsmath,amssymb,amsthm}
\usepackage{enumerate}
\usepackage{xspace}
\usepackage{ulem}
\usepackage{euscript}
\usepackage{epsfig}
\usepackage{pxfonts}
\usepackage{mathrsfs}
\usepackage{indentfirst}
\usepackage[colorlinks,linkcolor=red,citecolor=blue]{hyperref}

\makeatletter\@addtoreset{equation}{section}

\newtheorem{theorem}{Theorem}[section]

\newtheorem{lemma}[theorem]{Lemma}

\newtheorem{proposition}[theorem]{Proposition}
\theoremstyle{remark}

\newtheorem{remark}[theorem]{Remark}

\newcommand{\BH}{{\mathcal B}(\H)}

\newcommand{\FH}{{\mathcal F}(\mathcal{H})}

\newcommand{\NHO}{{\mathcal N}_1(\mathcal{H})}
\newcommand{\UH}{{\mathcal U}(\mathcal{H})}
\newcommand{\C}{\mathbb{C}}

\renewcommand{\H}{\mathcal{H}}

\newcommand{\spn}{{\rm span}}

\newcommand{\ran}{{\rm ran}\,  }
\renewcommand{\ker}{{\rm ker}\,}
\newcommand{\simu}{\sim_u}
\newcommand{\rank}{\operatorname{rank}\,}
\renewcommand{\r}{{r}\,  }
\newcommand{\w}{{w}\,  }

\newcommand{\tr}{\mathrm{t}}
\newcommand{\tzh}{{\mathcal F}_0(\H)}

\title[]{Spectral and numerical radius preservers on pairs of unitarily similar operators}

\author[Z. Amara]{Zouheir Amara}
\address{Department of Mathematics\\Laboratoire Math\'ematiques et Applications, FSO\\Mohammed First University\\Oujda 60000\\Morocco}
\email{z.amara@ump.ac.ma}

\subjclass[2020]{Primary 15A86; Secondary 47A12.}
\keywords{Linear preserver problems, spectral radius, numerical radius, unitary similarity, similarity.}

\begin{document}

\begin{abstract}
Let $\H$ be a complex Hilbert space. We characterize the bijective linear maps $\Phi:\BH\to\BH$ that preserve the spectral radius, respectively the numerical radius, on pairs of unitarily similar operators. We also characterize bijective linear maps that transform unitary similarity into similarity. Our results cover both finite and infinite-dimensional Hilbert spaces, with no separability assumption in the latter case.
\end{abstract}

\maketitle

\section{Introduction}

Throughout, let $\H$ be a complex Hilbert space, with $\dim\H\geq2$, and let $\BH$ be the algebra of all bounded linear operators on $\H$. Recall that two operators $T,S\in\BH$ are said to be {\it similar} if there exists an invertible operator $A\in\BH$ such that $T=ASA^{-1}$, and they are said to be {\it unitarily similar} if there exists a unitary operator $W\in\BH$ such that $T=WSW^*$. We write $T\sim S$ and $T\simu S$, respectively. The spectrum and spectral radius of $T$ are denoted by $\sigma(T)$ and $r(T)$, respectively. Its numerical radius is
$$
w(T)=\sup\big\{|\langle Tx,x\rangle|:\|x\|=1\big\}.
$$

\medskip

Several recent linear preserver problems concern functions that are constant on prescribed equivalence classes of matrices or operators. More precisely, let $\mathcal R$ be an equivalence relation on $\BH$ and let $\eta$ be a function that is constant on the $\mathcal R$-classes. One may then ask for the structure of linear maps $\Phi$ on $\BH$ satisfying
\begin{equation}\label{R.to.eta}
T\,\mathcal R\,S
\implies
\eta(\Phi(T))=\eta(\Phi(S))
\qquad\text{for all $T,S\in\BH$}.
\end{equation}

Costara \cite{Costara} considered this problem for the spectral radius on pairs of similar operators. In \cite{AmaraOudghiri}, we studied the corresponding problem on pairs of unitarily similar operators for the reduced minimum modulus, the minimum modulus, the surjectivity modulus and the norm. Bourhim and Mabrouk \cite{BourhimMabrouk} recently treated the numerical radius on pairs of unitarily similar operators.

\medskip

There is a natural connection between such function-preserver problems and preservers of equivalence relations. Suppose that $\eta$ is invariant under an equivalence relation $\mathcal S$. Then every map sending $\mathcal R$-equivalent operators to $\mathcal S$-equivalent operators necessarily satisfies \eqref{R.to.eta}. It is then natural to ask under what assumptions \eqref{R.to.eta} forces $\Phi$ to preserve or transform an underlying equivalence relation. This connection is used in \cite{AmaraOudghiri,BourhimMabrouk,Costara} and will also play a central role here.

\medskip

The corresponding equivalence-relation preservers have themselves been extensively studied. In finite dimension, Hiai \cite{Hiai} and Lim \cite{Lim} described linear maps on matrix algebras preserving similarity. For an infinite-dimensional separable complex Hilbert space, the corresponding problem for bijective linear maps on $\BH$ was solved by \v{S}emrl \cite{Semrl}. Lu and Peng \cite{LuPeng} subsequently extended this result to bijective linear maps on $\mathcal{B}(X)$, where $X$ is an arbitrary infinite-dimensional complex Banach space. For unitary similarity, Petek \cite{Petek} characterized surjective linear maps preserving unitary similarity in both directions in the setting of an arbitrary infinite-dimensional complex Hilbert space, while Karder et al. \cite{KarderPetekTaghavi} obtained the corresponding one-direction result for bijective linear maps on a separable infinite-dimensional complex Hilbert space. Their proof relies on the theorem of Davidson and Marcoux concerning linear spans of unitary and similarity orbits \cite{DavidsonMarcoux}, and consequently requires separability.

\medskip

In this paper, we first consider the spectral radius on unitary similarity classes. More precisely, replacing similarity in Costara's problem \cite{Costara} with the finer equivalence relation of unitary similarity, we study bijective linear maps $\Phi:\BH\to\BH$ satisfying
$$
T\simu S
\implies
r(\Phi(T))=r(\Phi(S))
\qquad\text{for all $T,S\in\BH$}.
$$
We prove that, for bijective $\Phi$, this condition is equivalent to requiring that $\Phi$ transform unitary similarity into similarity.

\medskip

Our second objective is the corresponding problem for the numerical radius:
$$
T\simu S
\implies
w(\Phi(T))=w(\Phi(S))
\qquad\text{for all $T,S\in\BH$}.
$$
In this case, the function-preserver problem is closely connected with the preservation of unitary similarity itself. This allows us to obtain the complete classification in both finite and infinite dimensions. In particular, our infinite-dimensional results remove the separability assumptions from the results of Bourhim and Mabrouk \cite{BourhimMabrouk} and Karder et al. \cite{KarderPetekTaghavi}.

\medskip

The remainder of the paper is organized as follows. Section 2 states the main results. Section 3 contains the preliminary results used throughout the paper. In Section 4, we prove the spectral radius results, and Section 5 is devoted to the numerical radius case.

\section{Main results}

For $T\in\BH$, let $T^{\tr}$ denote its transpose with respect to a fixed orthonormal basis of $\H$, and denote the identity operator on $\H$ by $I_{\H}$. When $\H$ is finite-dimensional, ${\rm Tr}(T)$ denotes the usual trace.

\begin{theorem}\label{main.thm.1}
Let $\H$ be a finite-dimensional complex Hilbert space, and let $\Phi:\BH\to\BH$ be a bijective linear map. The following are equivalent:
\begin{enumerate}[\rm (i)]
\item $\Phi(T)\sim\Phi(S)$ whenever $T\simu S$.
\item $\r(\Phi(T))=\r(\Phi(S))$ whenever $T\simu S$.
\item There exist an invertible operator $A\in\BH$ and scalars $c,\alpha\in\C$, with $c\neq0$, such that either
$$
\Phi(X)=cAXA^{-1}+\alpha {\rm Tr}(X)I_{\H}\qquad\text{for every }X\in\BH,
$$
or
$$
\Phi(X)=cAX^{\tr}A^{-1}+\alpha {\rm Tr}(X)I_{\H}\qquad\text{for every }X\in\BH.
$$
\end{enumerate}
\end{theorem}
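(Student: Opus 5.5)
The implications (iii)$\Rightarrow$(i)$\Rightarrow$(ii) are routine. For (iii)$\Rightarrow$(i), take $T=WSW^*$ with $W$ unitary. Then ${\rm Tr}(T)={\rm Tr}(S)$ and $T^{\tr}=\overline{W}S^{\tr}W^{\tr}$ with $\overline{W}$ unitary. So $cATA^{-1}+\alpha{\rm Tr}(T)I_{\H}$ is similar to $cASA^{-1}+\alpha{\rm Tr}(S)I_{\H}$ via $AWA^{-1}$, and the transpose case is handled in the same way. The implication (i)$\Rightarrow$(ii) holds because similarity preserves the spectrum.

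The real content is (ii)$\Rightarrow$(iii), which I would reduce to the classical Hiai--Lim description of linear similarity preservers. The plan is to show that (ii) forces $\Phi$ to map unitary similarity into similarity, and then to show that a bijective linear map with this property already preserves similarity.

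\textbf{Step 1: from (ii) to spectra.} Fix $T$ and a skew-Hermitian $K$, and put $U_t=e^{tK}$. The map $t\mapsto \Phi(U_tTU_t^*)$ is a real-analytic (indeed entire in $t\in\C$ after complexification) matrix-valued function $F(t)$. For every $\lambda\in\C$ and every $S\simu T$, the matrices $S+\lambda I$ are pairwise unitarily similar, so $r(\Phi(S)+\lambda\Phi(I))$ does not depend on $S$.

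I would first show that $\Phi(I)$ is a scalar multiple of $I$. The unitary orbit of $I$ is $\{I\}$, so this does not follow directly. Instead I would use that $\Phi^{-1}$ applied to (ii) yields a family of conditions. Concretely, note that $X-WXW^*$ ranges over all trace-zero matrices as $X,W$ vary (their span is $\mathfrak{sl}$). Using differentiation along $U_t$ at $t=0$, one gets that $r(\Phi(T)+s\Phi([K,T]))$ is stationary in $s$ at $0$ up to first order.

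The cleaner route is a spectrum-recovery trick. The function $\lambda\mapsto r(\Phi(S)+\lambda\Phi(I))$ is constant on unitary orbits, and if $\Phi(I)$ is invertible then the spectrum of $\Phi(I)^{-1}\Phi(S)$ is determined by these radii for all $\lambda$. To see this, note that $\sigma(A)$ is determined by the function $\lambda\mapsto r(A-\lambda I)$, via the farthest-point map, and a standard argument of Costara's type extends this to $\lambda$-pencils.

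\textbf{Step 2: $\Phi(I)$ is scalar and $\Phi$ maps unitary orbits into similarity orbits.} I would show that $\Phi$ maps the trace-zero nilpotents into nilpotents. If $N$ is nilpotent, then $e^{i\theta}N\simu N$ for suitable $N$ (e.g. weighted shifts in Jordan form), so $r(\Phi(N))$ is invariant under such rotations. Combining this with linearity and bijectivity, a Zariski-density argument would give $\sigma(\Phi(S))=\sigma(\Phi(T))$ whenever $S\simu T$. Since unitary orbits of matrices with distinct eigenvalues determine similarity classes up to the Jordan structure, and the diagonalizable matrices are dense, continuity of the spectrum would upgrade this to $\Phi(T)\sim\Phi(S)$ on a dense set. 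Closure properties of similarity orbits, which are closed for diagonalizable $T$, would then give (i).

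\textbf{Step 3: from (i) to similarity preservation.} Under (i), I would show that $\Phi$ preserves similarity. The span of the unitary orbit of a non-scalar $T$ is $\C I+\mathfrak{sl}$ when ${\rm Tr}(T)\neq0$. Moreover, a matrix similar to $T$ lies in the closure of the set obtained by composing unitary conjugation with positive diagonal scaling, and a limit argument over the Jordan form would give this step. Hiai/Lim then yields exactly the forms in (iii), and bijectivity forces $c\neq0$.

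\textbf{Main obstacle.} The main obstacle is Step 2: extracting equality of spectra, rather than only of spectral radii, from a condition on spectral radii alone. This is where I expect the bulk of the work, likely via Costara's technique of perturbing by rank-one operators and using analyticity of $\lambda\mapsto\Phi(T)+\lambda\Phi(R)$.
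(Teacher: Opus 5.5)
Your implications (iii)$\Rightarrow$(i)$\Rightarrow$(ii) are fine, but the passage from (ii) to (i) (your Steps 1--2) has a genuine gap, and the tools you propose cannot close it.

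\begin{itemize}
\item The function $\lambda\mapsto r(A-\lambda I)=\max_{\mu\in\sigma(A)}|\mu-\lambda|$ depends only on the convex hull of $\sigma(A)$. For example, $\sigma(A)=\{-1,0,1\}$ and $\sigma(A)=\{-1,1\}$ give the same function, so it does not recover the spectrum.
\item You never prove $\Phi(I_{\H})\in\C I_{\H}$. Without it, $r(\Phi(S)+\lambda\Phi(I_{\H}))$ is not of the above form anyway.
\item The rotation argument for nilpotents yields nothing. By linearity, $r(\Phi(e^{i\theta}N))=r(e^{i\theta}\Phi(N))=r(\Phi(N))$ holds for every $N$, so (ii) carries no information there.
\item The ``Zariski-density argument'' giving $\sigma(\Phi(S))=\sigma(\Phi(T))$ is not specified.
\item The final density step fails because limits of similar matrices need not be similar. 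For instance, $\mathrm{diag}(0,1/n)\sim\begin{bmatrix}0&1\\0&1/n\end{bmatrix}$, but the limits are $0$ and a nonzero nilpotent.
\end{itemize}

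The missing idea is to apply a \emph{global} spectral-radius preserver theorem to a conjugated map. For $V\in\UH$, the map $\varphi_V(X)=\Phi(V\Phi^{-1}(X)V^*)$ is bijective, linear and satisfies $r(\varphi_V(X))=r(X)$ for all $X$. By \cite{BresarSemrl} it equals $\alpha_VS_VXS_V^{-1}$ or $\alpha_VS_VX^{\tr}S_V^{-1}$. Choosing $V^2=U$ and composing twice removes the transpose, giving $\Phi(UXU^*)=\beta_UA_U\Phi(X)A_U^{-1}$. Applying this to $T=\Phi^{-1}(I_{\H})$ gives $UTU^*=\beta_UT$ for all $U$. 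Hence $T$ is scalar by Proposition~\ref{T.scalar}, $\beta_U=1$, and both (i) and $\Phi(\C I_{\H})=\C I_{\H}$ follow at once.

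Your Step 3 is also unsupported. Knowing $\Phi(UXU^*)\sim\Phi(X)$ for unitary $U$ gives no handle on $\Phi(DXD^{-1})$ for a positive diagonal $D$. The polar-decomposition-plus-limit sketch does not supply one; similarity preservation holds here only a posteriori, as a consequence of (iii).

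The paper never passes through similarity preservers or Hiai--Lim. From (i) it shows that $\Phi$ maps rank-one nilpotents to rank-one nilpotents. Rank one follows by the argument of \cite{AmaraOudghiri}. Nilpotency follows because $R\simu\alpha R$ for unimodular $\alpha$ gives $\Phi(R)\sim\alpha\Phi(R)$, hence $\sigma(\Phi(R))=\alpha\sigma(\Phi(R))$. This uses similarity, i.e.\ equality of spectra, not merely of spectral radii, which is exactly why your version is vacuous. Consequently $\Phi$ maps the trace-zero matrices bijectively onto themselves. The Li--Pierce lemma \cite{LiPierce} then gives $cAFA^{-1}$ or $cAF^{\tr}A^{-1}$ there, and $\Phi(I_{\H})=\lambda I_{\H}$ produces the $\alpha{\rm Tr}(X)I_{\H}$ term.
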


\begin{theorem}\label{main.thm.2}
Let $\H$ be an infinite-dimensional complex Hilbert space, and let $\Phi:\BH\to\BH$ be a bijective linear map. The following are equivalent:
\begin{enumerate}[\rm (i)]
\item $\Phi(T)\sim\Phi(S)$ whenever $T\simu S$.
\item $\r(\Phi(T))=\r(\Phi(S))$ whenever $T\simu S$.
\item There exist an invertible operator $A\in\BH$ and a nonzero scalar $c\in\C$ such that either
\begin{equation}\label{standard.form1.spectral}
\Phi(X)=cAXA^{-1}\qquad\text{for every }X\in\BH
\end{equation}
or
\begin{equation}\label{standard.form2.spectral}
\Phi(X)=cAX^{\tr}A^{-1}\qquad\text{for every }X\in\BH.
\end{equation}
\end{enumerate}
\end{theorem}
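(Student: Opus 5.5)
The implications (iii)$\Rightarrow$(i)$\Rightarrow$(ii) are routine. If $T=WSW^*$ with $W$ unitary, then $T^{\tr}=\overline{W}S^{\tr}W^{\tr}$, and $W^{\tr}=(\overline{W})^{-1}$ because $\overline{W}$ is again unitary. Hence $cAXA^{-1}$ and $cAX^{\tr}A^{-1}$ carry unitarily similar pairs to similar pairs. Similar operators have equal spectra, so (i)$\Rightarrow$(ii). The whole difficulty is (ii)$\Rightarrow$(iii). My plan is to reduce it to Costara's theorem \cite{Costara} or to \v{S}emrl's similarity preserver theorem, by first showing that (ii) forces $\Phi$ to send rank-one nilpotents to operators that are essentially nilpotent.

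\textbf{Step 1: nilpotents of order two.} Take $N=x\otimes y$ with $\langle x,y\rangle=0$ and $\|x\|=\|y\|=1$. Then $N\simu \lambda N$ for every unimodular $\lambda$, and also $N\simu N'$ for every other norm-one square-zero rank-one $N'$. Consider $T_t=\Phi(I)$-type perturbations: for $S$ arbitrary, $S+\lambda N\simu W(S+\lambda N)W^*$. The more useful observation is that $r(\Phi(X)+\Phi(\mu N))$ is constant along the circle $|\mu|=\rho$ whenever $X$ commutes appropriately, for instance when $X=UXU^*$ with $UNU^*=\lambda N$. I would take diagonal unitaries $U=\mathrm{diag}(1,\lambda)$ relative to $x,y$ (extended by the identity), and $X$ diagonal in a basis containing $x,y$. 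Then $r(\Phi(X)+\mu\Phi(N))$ depends only on $|\mu|$. By subharmonicity of $\mu\mapsto r(A+\mu B)$ (Vesentini) and a maximum-principle argument, this should force the spectrum of $\Phi(X)+\mu\Phi(N)$ to be independent of $\mu$ in a suitable sense, i.e. $\Phi(N)$ acts as a ``spectrally nilpotent perturbation'' of $\Phi(X)$. Taking $X=0$ first gives $r(\Phi(N))=r(\mu\Phi(N))$ only trivially, so a genuine $X$ is needed. I would choose $X$ ranging over the diagonal operators so as to obtain $\sigma(\Phi(X)+\mu\Phi(N))=\sigma(\Phi(X))$, in the spirit of the Aupetit-type characterization of the radical or of quasinilpotence.

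\textbf{Step 2: passing to similarity.} Since every operator is a linear combination of unitarily similar copies of simple operators, the conclusion of Step 1 should propagate. I aim to show that $\Phi$ maps the set of square-zero operators into nilpotents, or quasinilpotents, modulo scalars: in finite dimensions modulo $\alpha\,\mathrm{Tr}(X)I$, which accounts for the extra term in Theorem~\ref{main.thm.1}. In infinite dimensions $\mathrm{Tr}$ is unavailable, and since every operator there is a sum of commutators no nonzero linear functional vanishes on them. The scalar term must therefore disappear, which explains the difference between the two theorems. I would then invoke the known descriptions of bijective linear maps preserving square-zero or nilpotent operators (\v{S}emrl, Lu--Peng for the infinite-dimensional case without separability). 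These give $\Phi(X)=cAXA^{-1}+f(X)I$ or the transpose version, with a linear functional $f$.

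\textbf{Step 3: killing $f$.} With the standard form in hand, condition (ii) becomes the following: $r(cX+f(X)I)=r(cY+f(Y)I)$ whenever $X\simu Y$. Testing on pairs of rank-one or diagonal operators that differ by a unitary permutation of the diagonal forces $f$ to be unitarily invariant. In finite dimensions this means $f=\alpha\,\mathrm{Tr}$. In infinite dimensions a unitarily invariant linear functional on $\BH$ must vanish on all commutators of the form $WXW^*-X$, whose span is all of $\BH$ (a Halmos/Pearcy--Topping type fact, valid without separability). Hence $f=0$.

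I expect the main obstacle to be Step 1: extracting nilpotency of $\Phi(N)$ from a condition that only concerns spectral radii. I would first try the subharmonicity argument, using $X=\lambda I$-like choices built from $\Phi^{-1}(I)$ via bijectivity.
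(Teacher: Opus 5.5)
Your verification of (iii)$\Rightarrow$(i)$\Rightarrow$(ii) is fine. Your Step~3 (unitary invariance of the functional, then Halmos's theorem that every operator is a sum of commutators) is essentially the last step of the paper. The gap is Step~1, on which everything else depends: it has no working mechanism. Choosing a diagonal unitary $U$ that commutes with a diagonal $X$ and satisfies $UNU^*=\lambda N$ only tells you that $\mu\mapsto r(\Phi(X)+\mu\Phi(N))$ is radial. A radial subharmonic function is merely nondecreasing in $|\mu|$ (and convex in $\log|\mu|$). To force constancy you would need a bound as $|\mu|\to\infty$, and (ii) gives none. The reason is that $X+\mu N$, although it has the same spectrum as $X$, is not unitarily similar to $X$, nor to anything whose image under $\Phi$ you control. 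Even the spectral invariance $\sigma(\Phi(X)+\mu\Phi(N))=\sigma(\Phi(X))$ for diagonal $X$ would not by itself give nilpotency of $\Phi(N)$, since you do not know which operators the $\Phi(X)$ are. Your suggestion to use choices built from $\Phi^{-1}(I_{\H})$ also presupposes $\Phi(\C I_{\H})=\C I_{\H}$, which you have not established.

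The missing idea is to turn (ii) into an honest spectral-radius preserver. For $V\in\UH$, the map $\varphi_V(X)=\Phi(V\Phi^{-1}(X)V^*)$ is bijective, linear and satisfies $r(\varphi_V(X))=r(X)$. The Bre\v{s}ar--\v{S}emrl theorem \cite{BresarSemrl} then gives $\varphi_V=\alpha_VS_V(\cdot)S_V^{-1}$ or $\alpha_VS_V(\cdot)^{\tr}S_V^{-1}$. Taking $V^2=U$ and applying this twice eliminates the transpose and yields $\Phi(UXU^*)=\beta_UA_U\Phi(X)A_U^{-1}$. Then $T=\Phi^{-1}(I_{\H})$ satisfies $UTU^*=\beta_UT$ for all $U$, so $T$ is scalar by Proposition~\ref{T.scalar} and $\beta_U=1$. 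This gives (ii)$\Rightarrow$(i) and $\Phi(\C I_{\H})=\C I_{\H}$ directly, with no subharmonicity needed.

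Your Step~2 is also only a placeholder. The one-sided result actually available (\v{S}emrl's Proposition~3.1 in \cite{Semrl}) requires rank-one nilpotents to go to rank-one nilpotents, not to nilpotents ``modulo scalars''. It has two degenerate alternatives that must be ruled out. It yields a standard form only on $\tzh$, and only once $\Phi(\tzh)=\tzh$ is known. The paper supplies each of these:
\begin{enumerate}[\rm (a)]
\item it shows that $\Phi(R)$ is rank-one and nilpotent for $R\in\NHO$;
\item it excludes the degenerate cases;
\item it proves rank preservation on $\tzh$;
\item it applies Proposition~\ref{rank.proposition} to $UTU^*-T$ with $U=I_{\H}-2P$, followed by \v{S}emrl's codimension argument.
\end{enumerate}
Only then does it extend the standard form from $\tzh$ to $\BH$, via $\Psi(UXU^*)=U\Psi(X)U^*$.
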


We next turn to the numerical radius.

\begin{theorem}\label{main.thm.3}
Let $\H$ be a finite-dimensional complex Hilbert space, and let $\Phi:\BH\to\BH$ be a bijective linear map. The following are equivalent:
\begin{enumerate}[\rm (i)]
\item $\Phi(T)\simu\Phi(S)$ whenever $T\simu S$.
\item $\w(\Phi(T))=\w(\Phi(S))$ whenever $T\simu S$.
\item There exist a unitary operator $W\in\BH$ and scalars $c,\alpha\in\C$, with $c\neq0$, such that either
$$
\Phi(X)=cWXW^*+\alpha {\rm Tr}(X)I_{\H}\qquad\text{for every }X\in\BH,
$$
or
$$
\Phi(X)=cWX^{\tr}W^*+\alpha {\rm Tr}(X)I_{\H}\qquad\text{for every }X\in\BH.
$$
\end{enumerate}
\end{theorem}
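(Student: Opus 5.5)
The plan is to prove (iii)$\Rightarrow$(i)$\Rightarrow$(ii)$\Rightarrow$(iii). The first two implications are routine.

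For (iii)$\Rightarrow$(i), suppose $T=USU^*$ with $U$ unitary. Then ${\rm Tr}(T)={\rm Tr}(S)$, and so
$$\Phi(T)=cWUSU^*W^*+\alpha{\rm Tr}(S)I_{\H}=(WUW^*)\Phi(S)(WUW^*)^*.$$
In the transpose case we use $T^{\tr}=\overline{U}S^{\tr}U^{\tr}$, and $\overline{U}$ is again unitary with $(\overline U)^*=U^{\tr}$. For (i)$\Rightarrow$(ii), note that $w$ is invariant under unitary similarity.

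The substance is (ii)$\Rightarrow$(iii), which I would prove in four steps.

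\emph{Step 1: reduce to the traceless part.} Write $\BH=\C I_{\H}\oplus\tzh$, where $\tzh$ is the space of trace-zero operators. Every commutator-type difference $T-WTW^*$ lies in $\tzh$, and $\tzh$ is spanned by such differences. I expect to use the preliminary results of Section 3 here: linear spans of unitary orbits, and the fact that a linear map constant on unitary orbits, in a suitable sense, must factor through the trace.

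\emph{Step 2: show that $\Phi$ nearly preserves unitary orbits.} From (ii), for fixed $T$ the set $\{\Phi(WTW^*)\}$ lies on a level set of $w$. Replacing $T$ by $T+\lambda S$ and varying $\lambda\in\C$, we get
$$w\big(\Phi(WTW^*)+\lambda\Phi(WSW^*)\big)=w\big(\Phi(T)+\lambda\Phi(S)\big)$$
for all $\lambda$ and all unitaries $W$. The point of this is that the same unitary $W$ acts on both $T$ and $S$.

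\emph{Step 3: apply this to rank-one operators.} Take $T=x\otimes y$, and use that $w$ detects the numerical range for pencils $A+\lambda B$, $\lambda\in\C$. I would try to prove that $\Phi$ maps rank-one nilpotents, modulo the scalar part $\alpha{\rm Tr}(X)I_{\H}$, to rank-one nilpotents of a fixed norm ratio. From there, the standard Hiai/Lim similarity-preserver classification gives $\Phi(X)=cAXA^{-1}+\alpha{\rm Tr}(X)I_{\H}$, or the transpose version. This is consistent with Theorem~\ref{main.thm.1}: since $w$-preservation on orbits is at least as strong as what is needed there, I would aim to prove (i) of Theorem~\ref{main.thm.1} first and then invoke that theorem.

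\emph{Step 4 (the main obstacle): show that $A$ may be taken to be a multiple of a unitary.} Assume without loss of generality the form $cAXA^{-1}$, with $\alpha$ absorbed. Apply (ii) to the nilpotent $x\otimes y$ with $\langle x,y\rangle=0$ and unit vectors, and to its unitary conjugates. The numerical radius of a rank-one nilpotent $u\otimes v$ is $\|u\|\|v\|/2$. This forces $\|Ax\|\,\|(A^{-1})^*y\|$ to be constant over all orthonormal pairs $x,y$. A polar/singular-value argument then shows that all singular values of $A$ coincide, so $A$ is a scalar times a unitary $W$.

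The delicate point is $\alpha\neq0$: the scalar shift changes $w$. I would handle it by restricting to traceless $T$ in the test operators, which is allowed because unitary similarity preserves trace. I expect the hardest part to be the passage in Step 3 from the $w$-level information to the hypothesis of Theorem~\ref{main.thm.1}. Because $w(T)=w(S)$ does not imply similarity, I would instead exploit the family of equalities for all $\lambda$ to recover $r$. One route is the relation $r(X)=\lim w(X^n)^{1/n}$, but this fails under linearity. So I would try first to show directly that $\Phi$ preserves the set of square-zero operators, by characterizing them as those $X$ for which $w(e^{i\theta}X+\cdot)$ is rotation-invariant along the whole orbit.
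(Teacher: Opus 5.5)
Your implications (iii)$\Rightarrow$(i)$\Rightarrow$(ii) are correct. Step~4 is also correct on its own. For orthonormal $x,y$ the operator $x\otimes y$ is traceless, and $\Phi(x\otimes y)=c\,Ax\otimes(A^{-1})^*y$ is a rank-one nilpotent with numerical radius $|c|\,\|Ax\|\,\|(A^{-1})^*y\|/2$. All such $x\otimes y$ are unitarily similar, and comparing the pairs $(e_i,e_j)$ and $(e_j,e_i)$ of right singular vectors of $A$ forces all singular values to coincide.

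\textbf{The gap is Step~3.} This is where the whole difficulty lies, and you leave it as a plan rather than an argument. Nothing you write shows that $\Phi$ maps rank-one nilpotents to rank-one nilpotents, or that it satisfies condition (i) of Theorem~\ref{main.thm.1}. Your candidate routes do not close it:
\begin{enumerate}[\rm (a)]
\item $r(X)=\lim w(X^n)^{1/n}$ does not interact with linearity, as you note yourself.
\item The values $w(X+\lambda I_{\H})$ recover the numerical range of $X$. But transporting this through $\Phi$ needs $\Phi(I_{\H})\in\C I_{\H}$, which you have not proved, and equal numerical ranges would not give similarity anyway.
\item The characterization of square-zero operators by rotation invariance along orbits is neither made precise nor proved.
\end{enumerate}
Step~1 is also unusable as stated. $\Phi$ itself is not constant on unitary orbits (only $w\circ\Phi$ is), and Section~3 contains no factorization-through-the-trace result.

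\textbf{The missing idea is already latent in Step~2, and the pencils are unnecessary.} Because $\Phi$ is bijective, applying (ii) to $Z=\Phi^{-1}(Y)$ and $VZV^*$ says exactly this: for each $V\in\UH$, the bijective linear map $\varphi_V(Y)=\Phi(V\Phi^{-1}(Y)V^*)$ satisfies $w(\varphi_V(Y))=w(Y)$ for \emph{every} $Y\in\BH$. The classification of linear numerical radius isometries \cite{Chan} then gives $\varphi_V(Y)=\mu SYS^*$ or $\varphi_V(Y)=\mu SY^{\tr}S^*$, with $S$ unitary and $|\mu|=1$.

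Two obstructions remain before this yields (i). The first is the transpose, since a matrix need not be unitarily similar to its transpose once $\dim\H\geq3$. The second is the unimodular factor. The paper handles them as follows:
\begin{enumerate}[\rm (a)]
\item It chooses $V$ with $V^2=U$ and composes twice, so that $\Phi(UXU^*)=\beta_UW_U\Phi(X)W_U^*$ with $W_U$ unitary. This removes the transpose.
\item It takes $T=\Phi^{-1}(I_{\H})$. Then $UTU^*=\beta_UT$ for all $U\in\UH$, so $T$ is scalar by Proposition~\ref{T.scalar}, and hence $\beta_U=1$.
\end{enumerate}
This gives (i), and the paper concludes with \cite{HornLiTsing}.

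Your intended endgame also works from here. Condition (i) implies condition (i) of Theorem~\ref{main.thm.1}, which gives $\Phi(X)=cAXA^{-1}+\alpha{\rm Tr}(X)I_{\H}$ or its transposed version. Your Step~4 then makes $A$ a multiple of a unitary.
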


\begin{theorem}\label{main.thm.4}
Let $\H$ be an infinite-dimensional complex Hilbert space, and let $\Phi:\BH\to\BH$ be a bijective linear map. The following are equivalent:
\begin{enumerate}[\rm (i)]
\item $\Phi(T)\simu\Phi(S)$ whenever $T\simu S$.
\item $\w(\Phi(T))=\w(\Phi(S))$ whenever $T\simu S$.
\item There exist a unitary operator $W\in\BH$ and a nonzero scalar $c\in\C$ such that either
\begin{equation}\label{standard.form1.numerical}
\Phi(X)=cWXW^*\qquad\text{for every }X\in\BH
\end{equation}
or
\begin{equation}\label{standard.form2.numerical}
\Phi(X)=cWX^{\tr}W^*\qquad\text{for every }X\in\BH.
\end{equation}
\end{enumerate}
\end{theorem}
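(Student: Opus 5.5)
Implications (iii)$\Rightarrow$(i)$\Rightarrow$(ii) are direct, so the work is (ii)$\Rightarrow$(iii) in infinite dimensions.

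For (iii)$\Rightarrow$(i), if $T=USU^*$ with $U$ unitary, then $cWTW^*=(WUW^*)(cWSW^*)(WUW^*)^*$. In the transpose case, $T^{\tr}=\overline{U}S^{\tr}\overline{U}^{\,*}$, where $\overline{U}=(U^*)^{\tr}$ is again unitary. For (i)$\Rightarrow$(ii), the numerical radius is invariant under unitary similarity.

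Now assume (ii). \emph{Step 1: $\Phi(I)$ is scalar, and $\Phi$ is bounded.} Boundedness should follow from a closed-graph argument. Alternatively, one can show that $\Phi$ maps the span of a unitary orbit into a set where $w$ is controlled, using $\|X\|/2\le w(X)\le\|X\|$. For scalarity, note that $\{I\}$ is a full unitary orbit, so this alone gives nothing. Instead, for each $T$, the map $t\mapsto w(\Phi(T)+t\Phi(I))$ is constant on each unitary class of $T+tI$. I would use this only if needed. \emph{Step 2: rank-one reduction.} The key point is that every compact (in fact every) $X$ whose unitary orbit spans all of $\BH$ modulo scalars behaves well. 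I would show that for $T\simu S$ we have $w(\Phi(T-S))$ controlled. Concretely, for a unit vector pair $x\perp y$, the operators $x\otimes x$ and $y\otimes y$ are unitarily similar. Hence $w(\Phi(x\otimes x)+\lambda\Phi(z))=w(\Phi(y\otimes y)+\lambda\Phi(z))$ whenever $z$ is chosen so that the sums remain unitarily similar, e.g.\ when $z$ is supported on $\{x,y\}^\perp$. This should force $\Phi(x\otimes x)-\Phi(y\otimes y)$ to have zero numerical radius along a dense family, so that $\Phi$ maps differences $T-S$ with $T\simu S$ into a controlled space.

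\emph{Step 3: from $w$-invariance to unitary-similarity invariance.} This is the core step. I would adapt the finite-dimensional method (presumably the route to Theorem~\ref{main.thm.3}): show that $\Phi$ preserves the commutator span $[\BH,\BH]$, which equals $\BH$ in infinite dimensions. Then show that $\Phi$ maps the unitary orbit of each rank-one projection onto the unitary orbit of $c P$ for some rank-one projection $P$. This uses that $w$ of a self-adjoint operator equals its norm, together with an extremal characterization: the rank-one projections are exactly the operators $X$ for which $w(X+tY)$ for $Y$ in the orbit span behaves like that of a rank one. From this, $\Phi$ preserves rank-one projections up to a common scalar and preserves orthogonality, via $w(P+Q)=1$ iff $P\perp Q$ for rank-one projections. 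By linearity and boundedness, $\Phi$ then restricts to a Jordan $*$-map on finite-rank self-adjoint operators. Uhlig/Wigner-type results (without separability, as for Petek \cite{Petek}) yield $\Phi(X)=cWXW^*$ or $cWX^{\tr}W^*$ on finite-rank operators. The trace term $\alpha{\rm Tr}(X)I$ disappears because no trace functional exists on $\BH$. In fact, $\Phi(I)$ lies in the closure of the span of differences of unitary conjugates, which is all of $\BH$.

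\emph{Step 4: extension to all of $\BH$.} Composing with the inverse standard map, we may assume $\Phi$ is the identity on the finite-rank operators $\FH$. For arbitrary $T$ and a rank-one $F$, unitary similarity $T+F\simu U(T+F)U^*$ together with (ii) gives $w(\Phi(T)-T+F')=w(\Phi(T)-T+F'')$ for suitable unitarily similar finite-rank $F',F''$. Letting $F$ range over large multiples of rank-one projections forces $D=\Phi(T)-T$ to satisfy $\langle Dx,x\rangle$ constant, so $D=\lambda(T)I$. A linear functional $\lambda$ with $\lambda(\FH)=0$ and $\lambda$ constant on unitary orbits vanishes, since every operator in infinite dimensions is a sum of differences $X-UXU^*$ (Halmos: every operator is a sum of commutators). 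This gives $\Phi(T)=T$.

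I expect Step 3 to be the main obstacle. The first thing I would try there is to characterize rank-one projections through the equality $w(P+Q)=\max$ under unitary conjugation.
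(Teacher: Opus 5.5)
Your proposal has a genuine gap at exactly the step you flag. Steps 1--3 contain no mechanism that turns hypothesis (ii) into structural information about $\Phi$.

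\begin{itemize}
\item Hypothesis (ii) only compares $w(\Phi(T))$ with $w(\Phi(S))$ when $T\simu S$. It gives no relation between $w(\Phi(X))$ and $w(X)$.
\item Consequently the test ``$w(P+Q)=1\iff P\perp Q$'' tells you nothing about $\Phi(P)$ and $\Phi(Q)$.
\item The ``extremal characterization'' of rank-one projections is never actually formulated. Nothing shows that $\Phi$ sends rank-one projections to a common multiple of rank-one projections. So the passage to a Jordan $*$-map and Wigner/Uhlig-type theorems has no input.
\item Step 2's stated target is impossible. Since $w$ is a norm, $w(\Phi(x\otimes x)-\Phi(y\otimes y))=0$ would force $\Phi(x\otimes x)=\Phi(y\otimes y)$, contradicting injectivity.
\item The closed-graph claim in Step 1 is unsupported, and boundedness is never needed.
\end{itemize}

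The missing idea, which is Lemma \ref{A_U.numerical} in the paper, is that (ii) becomes a \emph{global} isometry condition once you conjugate by $\Phi$.
\begin{itemize}
\item Fix $V\in\UH$ and set $\varphi_V(X)=\Phi(V\Phi^{-1}(X)V^*)$. This bijective linear map satisfies $w(\varphi_V(X))=w(X)$ for every $X$, because $V\Phi^{-1}(X)V^*\simu\Phi^{-1}(X)$.
\item Chan's theorem then gives $\varphi_V=\alpha_VS_V(\cdot)S_V^*$ or $\varphi_V=\alpha_VS_V(\cdot)^{\tr}S_V^*$, with $S_V$ unitary.
\item Choose $V$ with $V^2=U$ and compose twice. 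This removes the transpose and yields $\Phi(UXU^*)=\beta_UW_U\Phi(X)W_U^*$.
\item Applying this to $\Phi^{-1}(I_{\H})$, together with Proposition \ref{T.scalar}, gives $\beta_U=1$.
\end{itemize}
So (ii) implies (i) directly, with no analysis of rank-one projections at all.

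The paper then argues as follows.
\begin{itemize}
\item Since (i) implies that $\Phi$ transforms unitary similarity into similarity, it applies Theorem \ref{main.thm.2}. That proof is where the finite-rank analysis and the commutator argument of your Step 4 live. It gives $\Phi=cA(\cdot)A^{-1}$ or $\Phi=cA(\cdot)^{\tr}A^{-1}$.
\item Comparing with $W_U$ gives $AU=\alpha_UW_UA$, and taking norms gives $|\alpha_U|=1$.
\item Hence $A^*A$ commutes with every unitary, so $A$ is a positive multiple of a unitary.
\end{itemize}

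Your Step 4 is closer to workable. Once $\Phi$ is the identity on $\tzh$, reflections $U=I_{\H}-2P$ satisfy $\Phi(UTU^*)-UTU^*=\Phi(T)-T$. An asymptotic argument with $F=t\,x\otimes x$, $t\to\infty$, can then show that $\langle Dx,x\rangle$ is constant. However, (ii) actually gives $w(T+D+F)=w(U(T+F)U^*+D)$, not the identity $w(D+F')=w(D+F'')$ you wrote. In any case, Step 4 presupposes the output of Step 3, which you have not established.
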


In each of Theorems \ref{main.thm.1}--\ref{main.thm.4}, the implications {\rm (i)}$\implies${\rm (ii)} and {\rm (iii)}$\implies${\rm (i)} are immediate. Hence it remains to prove {\rm (ii)}$\implies${\rm (iii)}.

\section{Preliminary results}

Denote by $\UH$ the set of all unitary operators on $\H$.

\begin{proposition}\label{T.scalar}
Let $T\in\BH$ be such that $UTU^*\in\C T$ for every $U\in\UH$. Then $T\in\C I_{\H}$.
\end{proposition}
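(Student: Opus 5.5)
We argue by contradiction and reduce to rank-one perturbations of the identity. Suppose $T\notin\C I_{\H}$. Then some unit vector $x$ fails to be an eigenvector of $T$; otherwise every vector is an eigenvector and $T$ is scalar by the standard argument. Write $Tx=\lambda x+\mu y$, where $y\perp x$, $\|y\|=1$ and $\mu\neq0$. The plan is to choose a single unitary $U$ for which $UTU^*\in\C T$ is impossible.

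First I would exploit unitaries that fix $x$. Let $U$ be the unitary with $Ux=x$, $Uy=-y$, and $U=I$ on $\{x,y\}^\perp$; this is a symmetry, so it is unitary. Then $UTU^*=cT$ for some $c\in\C$. Evaluate both sides at $x$:
\[
UTU^*x=UTx=\lambda x-\mu y,\qquad cTx=c\lambda x+c\mu y .
\]
Comparing the $y$-components gives $c\mu=-\mu$, so $c=-1$. Comparing the $x$-components then gives $\lambda=-\lambda$, so $\lambda=0$. The identity $UTU^*=-T$ also implies $\sigma(T)=-\sigma(T)$ and, more usefully, that ${\rm Tr}$-type invariants change sign. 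This alone does not yet give a contradiction, so I would run a second unitary.

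Next take $V$ with $Vx=x$, $Vy=iy$, and $V=I$ on $\{x,y\}^\perp$. The same computation gives
\[
VTV^*x=VTx=i\mu y\quad\text{and}\quad c'Tx=c'\mu y ,
\]
so $c'=i$ and $VTV^*=iT$. Now compare compressions to $\{x,y\}^\perp$, or simply use $V^4=I$ together with the behaviour on $y$. Write $Ty=\alpha x+\beta y+z$ with $z\perp x,y$. Then
\[
VTV^*y=VT(-i\,y)=-i\,V(\alpha x+\beta y+z)=-i\alpha x+\beta y-iz .
\]
This must equal $iTy=i\alpha x+i\beta y+iz$, which forces $\alpha=0$, $\beta=0$ and $z=0$, so $Ty=0$. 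Applying the symmetry $U$ from the first step to $y$ gives consistent information, so I would finish with a third unitary, namely the swap $W$ that exchanges $x$ and $y$ and is the identity elsewhere. Then $WTW^*x=WTy=0$, while $WTW^*y=WTx=\mu x$. The relation $WTW^*=c''T$ evaluated at $x$ gives $c''\mu y=0$, so $c''=0$. Evaluated at $y$ it gives $\mu x=0$, contradicting $\mu\neq0$.

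The main care is in the scalar-case argument and in making the unitaries well-defined when $\dim\H=2$, where $\{x,y\}^\perp=0$. I expect no real obstacle: all the unitaries used are diagonal or permutation operators relative to the decomposition $\C x\oplus\C y\oplus\{x,y\}^\perp$, so they work in every dimension $\geq2$ and require no separability.
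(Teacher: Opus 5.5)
Your proof is correct, and it takes a genuinely different, more hands-on route than the paper. Each step checks out. For the phase unitary $V$ (with $Vy=iy$ and $V$ the identity on $y^{\perp}$), evaluating at $x$ forces the multiplier to be $i$. Evaluating $VTV^*=iT$ at $y$ then gives $Ty=0$, and the swap of $x$ and $y$ yields $\mu x=c''Ty=0$. Your first reflection step is redundant: the $x$-component in the $V$-step already gives $\lambda=i\lambda$. The asides about spectra and trace-type invariants are never used and can be dropped.

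The paper argues structurally instead:
\begin{enumerate}[\rm (1)]
\item Comparing norms makes the multiplier $\alpha_U$ unimodular.
\item Hence $T^*T$ and $TT^*$ commute with every unitary, and since unitaries span $\BH$, they are nonzero scalars, so $T$ is invertible.
\item The hypothesis becomes $T^{-1}UT=\alpha_U U$.
\item A spectral comparison for $U=I_{\H}+(\lambda-1)P$, with $P$ a rank-one projection, forces $\alpha_U=1$, so $T$ commutes with every rank-one projection.
\end{enumerate}

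Your argument needs none of this machinery: no spanning by unitaries, no invertibility, no spectra. It also invokes the hypothesis only for three unitaries that act as the identity off the two-dimensional space $\spn\{x,y\}$, so it is both more elementary and formally stronger. The paper's version is coordinate-free and uses the same devices that reappear later in the proof of Theorem~\ref{main.thm.4}: a norm comparison to make a multiplier unimodular, followed by a commutant-of-unitaries argument.
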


\begin{proof}
Assume without loss of generality that $T\neq 0$. Then, for every $U\in\UH$, there exists $\alpha_U\in\C$, with $|\alpha_U|=1$, such that $UTU^*=\alpha_U T$. This implies that, for every $U\in\UH$,
$$
UT^*TU^*=T^*T\quad\text{and}\quad UTT^*U^*=TT^*,
$$
equivalently, $UT^*T=T^*TU$ and $UTT^*=TT^*U$. Thus, since $\BH$ is spanned by $\UH$, $T^*T, TT^*\in\C I_{\H}\setminus\{0\}$; in particular, $T$ is invertible. Then
\begin{equation}\label{VUV*}
T^{-1}UT=\alpha_U U\quad\text{for every }U\in\UH.  
\end{equation}
Now, fix a unimodular $\lambda\in\C$ so that $\lambda^2\neq 1$, let $P\in\BH$ be a rank-one orthogonal projection, and set $U=I_{\H}+(\lambda-1)P$. Then, $U$ is unitary and \eqref{VUV*} implies that
$$
\{1,\lambda\}=\sigma(U)=\alpha_U\sigma(U)=\{\alpha_U,\alpha_U \lambda\}.
$$
Then $\alpha_U=1$. In fact, if $\alpha_U=\lambda$ and $\alpha_U \lambda=1$, we would have $\lambda^2=1$; a contradiction. It follows that $T^{-1}UT=U$, and hence $UT=TU$. This implies that $PT=TP$, and so since $P$ was arbitrary, we obtain $T\in\C I_{\H}$.
\end{proof}

Let $\FH$ denote the linear space of all finite-rank operators on $\H$. For $T\in\BH$, $\ran T$ denotes the range of $T$.

\begin{proposition}\label{rank.proposition}
Assume that $\H$ is infinite-dimensional, and let $T\in\BH$. Suppose that for every rank-two orthogonal projection $P\in\BH$, we have
\begin{equation}\label{rank.bound.2}
\rank PT(I_{\H}-P) + \rank (I_{\H}-P)TP \leq 2.    
\end{equation}
Then, there exist $\lambda\in\C$ and an operator $R\in\BH$ of rank at most one such that $T=\lambda I_{\H}+R$.
\end{proposition}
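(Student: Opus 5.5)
The plan is to argue by contradiction. Assume $T$ is not of the form $\lambda I_{\H}+R$ with $\rank R\le 1$, and construct a rank-two orthogonal projection $P$ with
$$
\rank (I_{\H}-P)TP=2\quad\text{and}\quad PT(I_{\H}-P)\neq 0,
$$
which violates \eqref{rank.bound.2}. Identify a rank-two projection with an orthonormal pair $(x,y)$ spanning its range. Then $(I_{\H}-P)TP$ has rank $2$ exactly when $x,y,Tx,Ty$ are linearly independent. Likewise, since $PT(I_{\H}-P)=\big((I_{\H}-P)T^*P\big)^*$, we have $PT(I_{\H}-P)\neq0$ exactly when $\spn\{x,y\}$ is not invariant under $T^*$.

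\textbf{Step 1 (the algebraic lemma, main obstacle).} If $x,y,Tx,Ty$ are linearly dependent for all $x,y\in\H$, then $T=\lambda I_{\H}+R$ with $\rank R\le1$. I would prove this by a local-linear-dependence argument.
\begin{enumerate}[(a)]
\item If $T$ is scalar we are done. Otherwise pick $x$ with $x,Tx$ independent.
\item For $y$ outside $\spn\{x,Tx\}$, the vectors $x,y,Tx$ are independent, so dependence forces $Ty\in\spn\{x,Tx,y\}$. Write $Ty=\lambda(y)y+u(y)$ with $u(y)\in F:=\spn\{x,Tx\}$.
\item Compare $y_1$, $y_2$ and $y_1+y_2$ for generic choices, which is possible since $\dim\H=\infty$. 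This shows that $\lambda(y)$ is a constant $\lambda$ on a dense open set, so $T-\lambda I_{\H}$ maps $\H$ into the two-dimensional space $F$.
\item To reduce the rank from two to one, apply the hypothesis to pairs $y,z$ spanning a subspace far from $F$. If $(T-\lambda I_{\H})y$ and $(T-\lambda I_{\H})z$ were independent, then for generic such $y,z$ the four vectors $y,z,Ty,Tz$ would be independent, a contradiction.
\end{enumerate}
This generic-position bookkeeping is the delicate part. As a fallback I would invoke the standard results on locally linearly dependent operators (Aupetit, Brešar--\v{S}emrl) applied to the pair $I_{\H},T$ after passing to the quotient.

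\textbf{Step 2 (nonemptiness).} Under our assumption, Step 1 gives an orthonormal pair (after Gram--Schmidt, which does not change the span) in the set $U_1$ where $x,y,Tx,Ty$ are independent. Let $U_2$ be the set of pairs with $T^*\spn\{x,y\}\not\subseteq\spn\{x,y\}$. If $U_2$ were empty, every two-dimensional subspace would be $T^*$-invariant. Intersecting such subspaces shows $T^*x\in\C x$ for all $x$, so $T^*$, and hence $T$, would be scalar, contradicting the assumption. Thus both sets are nonempty.

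\textbf{Step 3 (intersection).} Fix $(x_1,y_1)\in U_1$ and $(x_2,y_2)\in U_2$, and restrict to pairs in a finite-dimensional subspace $\mathcal K$ containing $x_1,y_1,x_2,y_2$ together with their images under $T$ and $T^*$. Membership in $U_1$ is the nonvanishing of some $4\times4$ minor of the coordinates of $x,y,Tx,Ty$, and membership in $U_2$ is likewise the nonvanishing of a minor. Both are polynomial conditions in the real coordinates of $(x,y)\in\mathcal K\times\mathcal K$. We may work with linearly independent pairs rather than orthonormal ones, since only the span matters; that set is connected and open.

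The complement of each $U_i$ is then the zero set of a polynomial that is not identically zero. Hence $U_1\cap U_2$ is nonempty, being the complement of a proper algebraic set. A pair $(x,y)$ in it, orthonormalized, gives a projection $P$ with
$$
\rank PT(I_{\H}-P)+\rank (I_{\H}-P)TP\ge 1+2=3,
$$
contradicting \eqref{rank.bound.2}.
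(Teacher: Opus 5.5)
Your proposal is correct, and it takes a genuinely different route through the key step.

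The paper's route is as follows:
\begin{itemize}
\item It imports the implication ``$x,y,Tx,Ty$ always dependent $\Rightarrow$ $T\in\C I_{\H}+\FH$'' from \cite[Lemma 2.2]{LuPeng}.
\item It establishes that dependence by an explicit perturbation. From a pair with $x,y,Tx,Ty$ independent it passes to the planes $M_t=\spn\{x,y+tz\}$ and $N_t=\spn\{y,x+tz\}$. Openness of independence gives $\rank Q_tTP_t=2$, hence $T^*M_t\subseteq M_t$ and $T^*N_t\subseteq N_t$ for small $t$. A coordinate computation with $T^*z=ax+by+w$ then forces $T^*z=\alpha z$ for all $z\perp x,y$.
\item The cut from finite rank to rank one is done afterwards by the same kernel-shifting idea as your Step 1(d).
\end{itemize}

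Your route differs in two ways:
\begin{itemize}
\item You prove the Lu--Peng-type lemma yourself, so your proof is self-contained. Your constant-$\lambda(y)$ argument in Step 1(b)--(c) is sound and directly gives $\rank(T-\lambda I_{\H})\le 2$.
\item You replace the perturbation computation by decoupling the two requirements. You get $U_1\neq\emptyset$ from Step 1 and $U_2\neq\emptyset$ from the trivial fact that a non-scalar operator has a non-invariant plane. You then use the principle that two nonempty Zariski-open subsets of a finite-dimensional space intersect.
\end{itemize}
This is shorter and more conceptual. The paper's version stays within elementary linear algebra and avoids the polynomial bookkeeping.

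Two details in Step 3 need tidying:
\begin{itemize}
\item The coordinates must be taken in the finite-dimensional space $\mathcal K+T\mathcal K+T^*\mathcal K$. Putting the images of the four witness vectors into $\mathcal K$ does not make $Tx,T^*x\in\mathcal K$ for general $x\in\mathcal K$.
\item The complement of $U_i$ is not the zero set of one polynomial. It is only contained in the zero set of a single minor $m_i$ chosen nonzero at the witness pair. Then $m_1m_2\not\equiv0$, and any point where it is nonzero lies in $U_1\cap U_2$. The $3\times3$ minors used for $U_2$ vanish on dependent pairs, so that point really spans a plane.
\end{itemize}
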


\begin{proof}
Without loss of generality, we may assume that $T\notin \C I_{\H}$. The proof is divided into two steps:

\medskip

{\bf Step 1.} We prove that $T=\lambda I_{\H} + F$ for some $\lambda\in\C$ and $F\in \FH$. According to \cite[Lemma 2.2]{LuPeng}, it suffices to prove that, for all $x,y\in \H$, the vectors
$$
x,\ y,\ Tx,\ Ty
$$
are linearly dependent. Assume, to the contrary, that there exist vectors $x,y\in\H$ such that
$x, y, Tx, Ty$
are linearly independent. Fix an arbitrary nonzero vector $z\in \{x,y\}^\perp$, and let $w\in \{x,y\}^\perp$ and $a,b\in\C$ be such that
\begin{equation}\label{T^*z}
T^*z=ax+by+w.  
\end{equation}
Elementary linear algebra shows that there exists $r>0$ such that, for all $t\in[0,r)$, the vectors
$$
x,\ y+tz,\ Tx,\ T(y+tz)
$$
remain linearly independent. Set
$$
M_t=\spn\{x,y+tz\}
\quad\text{and}\quad
Q_t=I_{\H}-P_t
$$
where $P_t\in\BH$ is the orthogonal projection onto $M_t$.
Then, it is easy to see that, for every $t\in[0,r)$, the vectors $Q_tTx$ and $Q_tT(y+tz)$ are linearly independent, and hence
$
\rank Q_tTP_t=2.
$
By hypothesis, we get
$P_tTQ_t=0$; equivalently $Q_tT^*P_t=0$. In other words,
\begin{equation}\label{M_t.invariant}
T^*M_t\subseteq M_t\qquad\text{for every }t\in[0,r).
\end{equation}
Since $\bigcap_{0\leq t<r}M_t=\C x$, there exists $\alpha\in\C$ such that
$$
T^*x=\alpha x.
$$
Applying the same argument to the subspace $N_t:=\spn\{y,x+tz\}$ gives $s>0$ such that
\begin{equation}\label{N_t.invariant}
 T^*N_t\subseteq N_t\qquad\text{for every }t\in[0,s),   
\end{equation}
and so there exists $\beta\in\C$ such that
$$
T^*y=\beta y.
$$

It follows from \eqref{T^*z} that
$$
T^*(y+tz)
=
\beta y+tT^*z
=
tax+(\beta+tb)y+t w.
$$
Moreover, by \eqref{M_t.invariant}, for every $t\in[0,r)$, there exist $c_t,d_t\in\C$ such that $T^*(y+tz)=c_t x+d_t(y+tz)$, and so
$$
tax+(\beta+tb)y+t w
=
c_t x+d_t(y+tz).
$$
This implies that
$$
d_t=\beta+tb
\qquad\text{and}\qquad
tw=d_t tz,
$$
and hence
$$
w=d_t z=(\beta+tb)z\qquad\text{for every }t\in (0,r).
$$
Noting that $w$ does not depend on $t$, we get $b=0$, and therefore $w=\beta z$.

Since $T^*x=\alpha x$ and, by \eqref{N_t.invariant}, $T^*(x+tz)\in N_t$ for every $t\in [0,s)$, the same argument as above leads to  $a=0$ and $w=\alpha z$. It follows that $\alpha=\beta$ and $w=\alpha z$. Thus, by \eqref{T^*z}, 
$$
T^*z=\alpha z.
$$
Therefore, since $z\in\{x,y\}^{\perp}$ was arbitrary, we obtain $T^*=\alpha I_{\H}$, contradicting the fact that $T\notin\C I_{\H}$. Consequently, $x,y,Tx,Ty$ are linearly dependent for all $x,y\in\H$.

\medskip

{\bf Step 2.} We prove that $\rank F\leq 1$.
Suppose for a contradiction that $\rank F\ge2$. Choose $u,v\in\H$ so that $Fu,Fv$ are linearly independent. Set $L=\ran F$.
Since $F$ has finite rank, $\ker F$ is infinite-dimensional. Hence, elementary linear algebra shows that we can choose $x,y\in\ker F$ such that $u+x+L$ and $v+y+L$ are linearly independent.

Set $a=u+x$ and $b=v+y$. Then
$
Fa=Fu,
$
$
Fb=Fv,
$
and $Fa,Fb$ are linearly independent and belong to $L$. It follows that the vectors $a,\ b, \ Fa, \ Fb$
are linearly independent. Since $T=\lambda I_{\H}+F$, we have
$$
\spn\{a,b,Ta,Tb\}
=
\spn\{a,b,Fa,Fb\},
$$
and so we conclude that $a,\ b,\ Ta,\ Tb$ are linearly independent; a contradiction. Consequently, $F$ is of rank at most $1$.
\end{proof}

\begin{remark}
If $P\in\BH$ is an orthogonal projection, then
$$
PT-TP
=
\begin{bmatrix}
0&PT_{|\ran(I_{\H}-P)}\\
-(I_{\H}-P)T_{|\ran P}&0
\end{bmatrix}
\begin{array}{l}
\ran P\\
\ran(I_{\H}-P)
\end{array}.
$$
This implies that $\rank (PT-TP)=\rank PT(I_{\H}-P) + \rank (I_{\H}-P)TP$. Thus, the condition in \eqref{rank.bound.2} may equivalently be replaced by $\rank (PT-TP)\leq2$.
\end{remark}

\begin{remark}
In the proof of our main results, Proposition \ref{rank.proposition} is used
only in the following weaker form: if \eqref{rank.bound.2} holds for every
finite-rank orthogonal projection $P$, then $T$ is a scalar operator plus a
finite-rank operator. We stated Proposition \ref{rank.proposition} in the
sharper form above because both the conclusion that the finite-rank part has
rank at most one and the fact that it suffices to consider rank-two orthogonal projections
seem to be of independent interest.
\end{remark}

\section{Proof of Theorems \ref{main.thm.1} and \ref{main.thm.2}}

In this section, we assume that $\Phi:\BH\to\BH$ is a bijective linear map that satisfies
$$
T\simu S \implies \r(\Phi(T))=\r(\Phi(S))\qquad\text{for all $T,S\in\BH$}.
$$

\medskip

The proof of the following lemma is inspired by \cite[Proposition 3.1]{AmaraOudghiri}.

\begin{lemma}\label{A_U.spectral}
For every $U\in\UH$, there
exists an invertible operator $A_U\in\BH$ such that
$$
\Phi(UXU^*)=A_U\Phi(X)A_U^{-1}
\qquad\text{for every }X\in\BH.
$$
\end{lemma}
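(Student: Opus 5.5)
Fix $U\in\UH$ and consider the linear bijection $\Psi=\Phi\circ\mathrm{Ad}_U\circ\Phi^{-1}$ on $\BH$, where $\mathrm{Ad}_U(X)=UXU^*$. Since $UXU^*\simu X$, and unitary similarity is preserved under addition of terms that are conjugated by the same unitary, the hypothesis applied to $X+\Phi^{-1}(Y)$-type combinations gives
$$
r(\Psi(Y)+Z)=r(Y+\Psi^{-1}(Z))\quad\text{in the special case } Z=\Psi(Z'),
$$
and more usefully, for every $Y\in\BH$, $r(\Psi(Y))=r(Y)$. Indeed, writing $Y=\Phi(X)$, we have $\Psi(Y)=\Phi(UXU^*)$ and $UXU^*\simu X$, so $r(\Psi(Y))=r(\Phi(UXU^*))=r(\Phi(X))=r(Y)$. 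Thus $\Psi$ is a bijective linear spectral radius preserver on $\BH$. The plan is to invoke the known structure of such preservers, in the spirit of \cite[Proposition 3.1]{AmaraOudghiri} and Costara's work \cite{Costara}. Bijective linear maps on $\BH$ preserving the spectral radius are of the form $Y\mapsto cAYA^{-1}$ or $Y\mapsto cAY^{\tr}A^{-1}$ with $|c|=1$. In finite dimension one must allow more: spectral radius preservers on $M_n$ (Li--Tsing type results) are exactly unimodular multiples of similarities, possibly composed with transposition.

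It then remains to eliminate the transpose alternative and the scalar $c$. For this I would use the one-parameter family $U_t=e^{itH}$ with $H$ self-adjoint, together with the fact that every unitary is a product of such exponentials; in fact every unitary is $e^{iH}$. For $U=e^{iH}$, the map $t\mapsto\Psi_t$ is a group homomorphism $\mathbb R\to$ (spectral radius preservers). Hence $\Psi_t=\Psi_{t/2}^2$, and the square of either type of map (similarity or transpose form) is of similarity form $Y\mapsto c^2BYB^{-1}$, since transposing twice cancels. This rules out the transpose form directly.

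For the scalar, I would compare the unit: $\Psi_t(I)=c_tI$ with $\Psi_t$ multiplicative in $t$ and $\Psi_0=\mathrm{id}$. The goal is to show $c=1$. Evaluating on $Y=\Phi(I)$ gives $\Psi(\Phi(I))=\Phi(I)$, since $UIU^*=I$. If $\Phi(I)$ is not nilpotent, comparing spectra gives $c\,\sigma(\Phi(I))=\sigma(\Phi(I))$. This does not immediately force $c=1$ (the spectrum could be rotation-invariant), so instead I would use $c_t$ continuous in $t$: compare $\Psi_t(Y)$ for $Y$ of the form $\Phi(P)$ with $P$ a projection commuting with $H$, and trace-like invariants. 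This is the main obstacle. The cleanest fallback is that $c$ can simply be absorbed: if $\Psi=cA(\cdot)A^{-1}$, then $\Phi(UXU^*)=cA_U\Phi(X)A_U^{-1}$. Applying this with $X=I$ gives $c\,\Phi(I)\sim\Phi(I)$, and applying it to suitable $X$ whose image has a spectrum that is not rotation invariant forces $c=1$. Such an $X$ exists because $\Phi$ is surjective: take $X=\Phi^{-1}(E)$ with $E$ a rank-one idempotent, whose spectrum is $\{0,1\}$.
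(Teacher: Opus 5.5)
Your reduction is sound and matches the paper's. $\Phi\circ\mathrm{Ad}_U\circ\Phi^{-1}$ is a bijective spectral radius preserver, so by Bre\v{s}ar--\v{S}emrl \cite{BresarSemrl} (the relevant reference, valid also in finite dimensions) it equals $cA(\cdot)A^{-1}$ or $cA(\cdot)^{\tr}A^{-1}$ with $|c|=1$. Writing $U=(e^{iH/2})^2$, which is the paper's choice of $V$ with $V^2=U$, then removes the transpose. This gives $\Phi(UXU^*)=c_UA_U\Phi(X)A_U^{-1}$ with $|c_U|=1$.

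The gap is the last step, $c_U=1$. Your fallback is circular. For $X=\Phi^{-1}(E)$ you get $\Phi(UXU^*)=c_UA_UEA_U^{-1}$, with spectrum $\{0,c_U\}$. But comparing this with $\sigma(E)=\{0,1\}$ requires $\Phi(UXU^*)$ and $\Phi(X)$ to have the same spectrum, which is essentially the similarity you are trying to prove. The hypothesis only gives $r(\Phi(UXU^*))=r(\Phi(X))$, and this holds for $c_UA_U(\cdot)A_U^{-1}$ with \emph{any} unimodular $c_U$. So no test operator at a fixed $U$ can detect $c_U$. Likewise, $\Phi(I_{\H})\sim c_U\Phi(I_{\H})$ tells you nothing while $\sigma(\Phi(I_{\H}))$ is unknown. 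The one-parameter group argument only shows that $t\mapsto c_t$ is multiplicative, i.e.\ at best a character (and continuity is not even established); it does not show the character is trivial.

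Information at a single unitary cannot suffice. On $M_2(\C)$ take $U={\rm diag}(1,-1)$ and the linear bijection $\Phi$ swapping $E_{11}\leftrightarrow E_{12}$ and $E_{22}\leftrightarrow E_{21}$. Then $\Phi\circ\mathrm{Ad}_U\circ\Phi^{-1}=-\mathrm{Ad}_U$, which preserves the spectral radius but is not of the form $A(\cdot)A^{-1}$, since it sends $I$ to $-I$.

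The missing idea is to use the hypothesis for all unitaries simultaneously, evaluating at $T=\Phi^{-1}(I_{\H})$ rather than at $\Phi(I_{\H})$:
$$\Phi(UTU^*)=c_UA_UI_{\H}A_U^{-1}=c_UI_{\H}=\Phi(c_UT).$$
Injectivity then gives $UTU^*=c_UT$ for every $U\in\UH$. Proposition \ref{T.scalar} yields $T\in\C I_{\H}$ with $T\neq0$, so $c_UT=UTU^*=T$ and $c_U=1$.
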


\begin{proof}
Fix $U\in\UH$, and choose $V\in\UH$ such that
$V^2=U$. Define $\varphi_V:\BH\to\BH$ by
$$
\varphi_V(X)=\Phi\bigl(V\Phi^{-1}(X)V^*\bigr).
$$
Clearly, $\varphi_V$ is a bijective linear map. Since
$
V\Phi^{-1}(X)V^*\simu \Phi^{-1}(X),
$
we have
$$
r(\varphi_V(X))=r\left( \Phi\bigl(V\Phi^{-1}(X)V^*\bigr) \right)=r\left( \Phi\bigl(\Phi^{-1}(X)\bigr) \right)=r(X)
$$
for every $X\in\BH$. By \cite{BresarSemrl}, there exist an invertible operator $S_V\in\BH$
and a unimodular scalar $\alpha_V\in\C$ such that either
\begin{equation}\label{varphi.form1}
\Phi(VXV^*)=\varphi_V(\Phi(X))=\alpha_VS_V\Phi(X)S_V^{-1}
\qquad\text{for every }X\in\BH,
\end{equation}
or
\begin{equation}\label{varphi.form2}
\Phi(VXV^*)=\varphi_V(\Phi(X))=\alpha_VS_V\Phi(X)^{\tr}S_V^{-1}
\qquad\text{for every }X\in\BH.
\end{equation}

If \eqref{varphi.form1} holds, then, for every $X\in\BH$,
$$
\begin{aligned}
\Phi(UXU^*)
&=\Phi\bigl(V(VXV^*)V^*\bigr)\\
&=\alpha_VS_V\Phi(VXV^*)S_V^{-1}\\
&=\alpha_V^2S_V^2\Phi(X)(S_V^2)^{-1}.
\end{aligned}
$$
If \eqref{varphi.form2} holds, then, for every $X\in\BH$,
$$
\begin{aligned}
\Phi(UXU^*)
&=\alpha_VS_V\Phi(VXV^*)^{\tr}S_V^{-1}\\
&=\alpha_V^2S_V(S_V^{-1})^{\tr}\Phi(X)S_V^{\tr}S_V^{-1}.
\end{aligned}
$$
Consequently, in either case there exist an invertible operator
$A_U\in\BH$ and a unimodular scalar $\beta_U\in\C$ such that
\begin{equation}\label{beta_U.spectral}
\Phi(UXU^*)=\beta_UA_U\Phi(X)A_U^{-1}
\qquad\text{for every }X\in\BH.
\end{equation}

It remains to prove that $\beta_U=1$ for every $U\in\UH$. Set $T=\Phi^{-1}(I_{\H})$. From \eqref{beta_U.spectral}, we get
$$
\Phi(UTU^*)=\beta_U A_UA_U^{-1}=\beta_U I_{\H}=\Phi(\beta_UT),
$$
and so
$$
UTU^*=\beta_UT
\qquad\text{for every }U\in\UH.
$$
Proposition \ref{T.scalar} yields $T\in\C I_{\H}$. Therefore
$\beta_UT=UTU^*=UU^*T=T$. 
Thus, $T\neq 0$ implies $\beta_U=1$, and the proof is complete.
\end{proof}

\begin{remark}\label{scalar}
One can see from the proof of Lemma \ref{A_U.spectral} that $\Phi(\C I_{\H})=\C I_{\H}$.
\end{remark}

Denote by $\NHO$ the set of all nilpotent rank-one operators on $\H$, and by $\tzh$ the linear space of all finite-rank operators on $\H$ with vanishing trace.

\begin{lemma}\label{rank-one.preserving}
    $\Phi(R)\in\NHO$ for every $R\in\NHO$.
\end{lemma}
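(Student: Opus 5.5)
The plan is to exploit Lemma~\ref{A_U.spectral} with special unitaries of the form $U=I_{\H}+(\lambda-1)P$, where $P$ is a rank-one orthogonal projection and $\lambda$ is a unimodular scalar that is not a root of unity. The aim is to read off $\Phi(R)$ from an eigenspace of the map $\mathrm{Ad}_U:X\mapsto UXU^*$.

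\textbf{Step 1: $R$ spans an eigenspace of $\mathrm{Ad}_U$.} Write $R=u\otimes v$, i.e.\ $Rx=\langle x,v\rangle u$, with $\|u\|=1$ and $v\perp u$, $v\neq0$. Let $P$ be the projection onto $\C u$ and $U=I_{\H}+(\lambda-1)P$. A direct computation with respect to $\H=\C u\oplus u^\perp$ shows that $\mathrm{Ad}_U$ is diagonalizable with eigenvalues $1,\lambda,\bar\lambda$. Its eigenspaces are:
\begin{itemize}
\item $E_1=\{X:XP=PX\}$;
\item $E_\lambda=P\,\BH\,(I_{\H}-P)=\{u\otimes w:\ w\perp u\}$;
\item $E_{\bar\lambda}=(I_{\H}-P)\,\BH\,P$.
\end{itemize}
In particular $R\in E_\lambda$.

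\textbf{Step 2: transport the eigenspaces.} Lemma~\ref{A_U.spectral} says $\Phi\circ\mathrm{Ad}_U=\mathrm{Ad}_{A_U}\circ\Phi$, where now $\mathrm{Ad}_{A_U}X=A_UXA_U^{-1}$. Since $\Phi$ is bijective, $\mathrm{Ad}_{A_U}$ is again diagonalizable with eigenvalues exactly $1,\lambda,\bar\lambda$, and $\Phi(E_\mu(\mathrm{Ad}_U))=E_\mu(\mathrm{Ad}_{A_U})$ for each $\mu$.

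\textbf{Step 3: the structure of $A_U$.} I would next show that $A_U$ has exactly two spectral points $a,b$ with $a/b=\lambda$ and is diagonalizable. The reason is that the eigenvalues of $\mathrm{Ad}_{A_U}$ are the ratios of points of $\sigma(A_U)$, and the eigenspaces of $\mathrm{Ad}_{A_U}$ span $\BH$. Granting this, $A_U=b\bigl(I_{\H}+(\lambda-1)Q\bigr)$ for some nontrivial idempotent $Q$. Consequently
$$
E_\lambda(\mathrm{Ad}_{A_U})=Q\,\BH\,(I_{\H}-Q),\qquad E_1(\mathrm{Ad}_{A_U})=\{X:XQ=QX\}.
$$
Every element of $Q\,\BH\,(I_{\H}-Q)$ squares to zero, so $\Phi(R)$ is a nonzero nilpotent, nonzero by injectivity. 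It remains to show $\rank\Phi(R)=1$, and for this it suffices that $\rank Q=1$ or $\rank(I_{\H}-Q)=1$.

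\textbf{Step 4: rank of $Q$ (main obstacle).} In finite dimension $n$ this is a dimension count. We have $\dim E_\lambda(\mathrm{Ad}_U)=n-1$, while $\dim Q\,\BH\,(I_{\H}-Q)=k(n-k)$ with $k=\rank Q$. Hence $k(n-k)=n-1$, which forces $k\in\{1,n-1\}$.

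In infinite dimension a dimension count is unavailable. I would instead use two orthogonal unit vectors $u_1,u_2$ with projections $P_1,P_2$ and unitaries $U_1,U_2$. The intersection $E_\lambda(\mathrm{Ad}_{U_1})\cap E_{\bar\lambda}(\mathrm{Ad}_{U_2})=\C\, u_1\otimes u_2$ is one-dimensional. Note that $\mathrm{Ad}_{U_1}$ and $\mathrm{Ad}_{U_2}$ commute, so $\Phi$ carries this intersection to the corresponding intersection for $\mathrm{Ad}_{A_{U_1}}$ and $\mathrm{Ad}_{A_{U_2}}$, which is then one-dimensional too. I would try to show that this one-dimensionality, applied over all such pairs, forces the idempotents to have rank or corank one.

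A fallback is to apply Proposition~\ref{rank.proposition} (via its Remark) to suitable images. If the idempotents $Q$ had infinite rank and infinite corank, the images of commutators with rank-two projections would have too large a rank. This step is the one I expect to require the most care.
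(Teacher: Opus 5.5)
\textbf{There is a genuine gap: your argument proves the lemma only when $\dim\H<\infty$.}

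In finite dimension, Step 3 is routine via the Kronecker description of $\mathrm{Ad}_A$, and the count $k(n-k)=n-1$ forces $k\in\{1,n-1\}$. Your square-zero route to nilpotency is a valid alternative to the paper's. The paper uses $R\simu\alpha R$ for unimodular $\alpha$, so $\sigma(\Phi(R))$ is finite and rotation invariant, hence $\{0\}$. It obtains the rank-one part by modifying \cite[Lemma 3.4(i)]{AmaraOudghiri}.

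The lemma matters most in infinite dimensions, where it feeds \v{S}emrl's structure result in Lemma \ref{A.and.B}. There your Step 4 is only a plan, and the plan as stated cannot succeed. Knowing that $Q_i\BH(I_{\H}-Q_i)\cap(I_{\H}-Q_j)\BH Q_j$ is one-dimensional for all pairs does not force rank or corank one. Take a closed subspace $M$ with $M$ and $M^\perp$ infinite-dimensional, an orthonormal family $\{e_i\}\subset M^\perp$, and let $Q_i$ be the orthogonal projection onto $M\oplus\C e_i$. These projections commute, and every such intersection is $\C\, e_i\otimes e_j$. Yet each $Q_i$ has infinite rank and corank.

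Your fallback via Proposition \ref{rank.proposition} is also unsupported. That proposition needs a rank bound on commutators in the domain. In the paper, such a bound comes only from rank preservation on $\tzh$ (Lemma \ref{rank.preserving}), which itself rests on the statement you are proving.

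\textbf{One way to close the gap along your lines is to use products of your unitaries.} Take orthogonal unit vectors $u_1,u_2$ and write $A_i=A_{U_i}=b_i(I_{\H}+(\lambda-1)Q_i)$.
\begin{enumerate}[\rm (1)]
\item Since $\mathrm{Ad}_{U_1}$ and $\mathrm{Ad}_{U_2}$ commute, conjugating by $\Phi$ gives $A_1A_2=\gamma A_2A_1$ for some scalar $\gamma$. Then $\sigma(A_1)=\gamma\sigma(A_1)$ with $\sigma(A_1)=\{a,b\}$ and $a/b=\lambda\neq-1$, which forces $\gamma=1$. Hence $Q_1$ and $Q_2$ commute.
\item With commuting idempotents, your intersection equals $Q_1(I_{\H}-Q_2)\BH Q_2(I_{\H}-Q_1)$. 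One-dimensionality then gives $\rank Q_1(I_{\H}-Q_2)=\rank Q_2(I_{\H}-Q_1)=1$.
\item Next, $\mathrm{Ad}_{A_1A_2}=\Phi\circ\mathrm{Ad}_{U_1U_2}\circ\Phi^{-1}$. Since $\sigma(U_1U_2)=\{1,\lambda\}$, the only eigenvalues of $\mathrm{Ad}_{A_1A_2}$ are $1,\lambda,\bar\lambda$. But every nonzero element of $Q_1Q_2\BH(I_{\H}-Q_1)(I_{\H}-Q_2)$ is an eigenvector for $\lambda^2$. So $Q_1Q_2=0$ or $(I_{\H}-Q_1)(I_{\H}-Q_2)=0$.
\item Combined with (2), this gives $\rank Q_1=1$ or $\rank(I_{\H}-Q_1)=1$.
\end{enumerate}

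\textbf{A smaller point concerns Step 3.} Knowing that eigenvalues of $\mathrm{Ad}_A$ are ratios of spectral points of $A$ (Rosenblum) is the wrong direction. You need $\sigma(A)\sigma(A)^{-1}\subseteq\sigma(\mathrm{Ad}_A)$, which is the spectral theorem for elementary operators. Alternatively, test $p(\mathrm{Ad}_A)=0$ on rank-one operators, where $p(z)=(z-1)(z-\lambda)(z-\bar\lambda)$. This shows $A$ is locally, hence globally, algebraic, after which the two-point spectrum and diagonalizability follow.
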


\begin{proof}
Since Lemma \ref{A_U.spectral} shows, in particular, that $\Phi$ transforms unitary similarity into similarity, a minor modification of the proof of \cite[Lemma 3.4 (i)]{AmaraOudghiri} shows that, for every $R\in\NHO$, the operator $\Phi(R)$ is rank-one. Moreover, since for every unimodular $\alpha\in\C$, $R\sim_u\alpha R$, we get $\Phi(R)\sim\alpha \Phi(R)$. Therefore,  $\sigma\left(\Phi(R)\right)=\alpha \sigma\left(\Phi(R)\right)$ for every unimodular $\alpha\in\C$. Since $\sigma\left(\Phi(R)\right)$ is finite, we get $\sigma\left(\Phi(R)\right)=\{0\}$, and consequently $\Phi(R)$ is also nilpotent.
\end{proof}

\begin{remark}\label{phi.tzh}
Since $\tzh$ is spanned by rank-one nilpotent operators, we can see from Lemma \ref{rank-one.preserving} that $\Phi(\tzh)\subseteq\tzh$.
\end{remark}

\subsection{The finite-dimensional case}

We can now prove Theorem \ref{main.thm.1}.

\begin{proof}[Proof of Theorem \ref{main.thm.1}]
{\rm (ii)}$\implies${\rm (iii)}. By Remark \ref{phi.tzh}, 
$\Phi(\tzh)\subseteq\tzh$.
Moreover, since $\Phi$ is injective and $\tzh$ is finite-dimensional, it follows that $\Phi(\tzh)=\tzh$. Hence the restriction $\Phi:\tzh\to\tzh$ is bijective. By \cite[Lemma 2.4]{LiPierce}, there exist an invertible operator $A\in\BH$ and a nonzero $c\in\C$ such that either
\begin{equation}\label{cAFA-1}
\Phi(F)=cAFA^{-1}\qquad\text{for every }F\in\tzh,
\end{equation}
or
\begin{equation}\label{cAFtA-1}
\Phi(F)=cAF^{\tr}A^{-1}\qquad\text{for every }F\in\tzh. 
\end{equation}
By Remark \ref{scalar}, there exists $\lambda\in\C$ such that $ \Phi(I_{\H})=\lambda I_{\H}$. Let $X\in\BH$ and set $F=X-\frac{{\rm Tr}(X)}{n}I_{\H}$ where $n=\dim\H$. Then ${\rm Tr}(F)=0$, so $F\in\tzh$, and
$$
X=F+\frac{{\rm Tr}(X)}{n}I_{\H}.
$$
Suppose first that \eqref{cAFA-1} holds. Then
\begin{align*}
\Phi(X) &=cA\left(X-\frac{{\rm Tr}(X)}{n}I_{\H}\right)A^{-1} +\frac{{\rm Tr}(X)}{n}\lambda I_{\H}\\
&=cAXA^{-1} +\frac{\lambda-c}{n}{\rm Tr}(X)I_{\H}. 
\end{align*}
Similarly, if \eqref{cAFtA-1} holds, then
\begin{align*}
\Phi(X) &=cA\left(X-\frac{{\rm Tr}(X)}{n}I_{\H}\right)^{\tr}A^{-1} +\frac{{\rm Tr}(X)}{n}\lambda I_{\H}\\
&=cAX^{\tr}A^{-1} +\frac{\lambda-c}{n}{\rm Tr}(X)I_{\H}.
\end{align*}
This gives the desired conclusion.
\end{proof}

\subsection{The infinite-dimensional case}

In the remainder of this section, $\H$ is assumed to be infinite-dimensional.

\medskip

For vectors $x,y\in\H$, let $x\otimes y$ denote the bounded linear operator defined on $\H$ by $(x\otimes y)h=\langle h,y\rangle x$.

\begin{remark}\label{structure.preservers.NHO}
In \cite[Proposition 3.1]{Semrl}, \v{S}emrl proved that if
$\varphi:\BH\to\BH$ is an injective linear map satisfying
$\varphi(\NHO)\subseteq\NHO$, then one of the following alternatives holds:
\begin{enumerate}[\rm (i)]
    \item There exist a nonzero vector $x\in\H$ and an injective conjugate-linear map
    $h:\tzh\to\{x\}^{\perp}$ such that
    $$
    \varphi(F)=x\otimes h(F)\qquad\text{for every }F\in\tzh.
    $$
    
    \item There exist a nonzero vector $x\in\H$ and an injective linear map
    $h:\tzh\to\{x\}^{\perp}$ such that
    $$
    \varphi(F)=h(F)\otimes x\qquad\text{for every }F\in\tzh.
    $$
    
    \item There exist injective linear maps $A,B:\H\to\H$ such that
    $$
    \varphi(x\otimes y)=Ax\otimes By
    \qquad\text{for every }x\otimes y\in\NHO.
    $$
    
    \item There exist injective conjugate-linear maps $A,B:\H\to\H$ such that
    $$
    \varphi(x\otimes y)=Ay\otimes Bx
    \qquad\text{for every }x\otimes y\in\NHO.
    $$
\end{enumerate}
Although \v{S}emrl stated the result for separable infinite-dimensional Hilbert spaces, it is easy to see from his proof that separability is not used. This was also observed by Lu and Peng, who noted that the argument remains valid for arbitrary infinite-dimensional Banach spaces; see \cite[Proposition 2.6]{LuPeng}. Hence, no separability assumption is needed here.
\end{remark}

\begin{lemma}\label{A.and.B}
There exist either injective linear maps $A:\H\to\H$ and $B:\H\to\H$ such that
\begin{equation}\label{AxoBy}
\Phi(x\otimes y)=Ax\otimes By\quad \text{for every $x\otimes y\in\NHO$}
\end{equation}
or injective conjugate-linear maps $A:\H\to\H$ and $B:\H\to\H$ such that
\begin{equation}\label{AyoBx}
\Phi(x\otimes y)=Ay\otimes Bx\quad \text{for every $x\otimes y\in\NHO$.}
\end{equation}
\end{lemma}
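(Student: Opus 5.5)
The plan is to apply \v{S}emrl's structure result from Remark \ref{structure.preservers.NHO} to $\varphi=\Phi$ and then rule out the two degenerate alternatives. The hypotheses are in place: $\Phi$ is injective, being bijective, and Lemma \ref{rank-one.preserving} gives $\Phi(\NHO)\subseteq\NHO$. So one of the alternatives (i)--(iv) holds. Alternatives (iii) and (iv) are exactly \eqref{AxoBy} and \eqref{AyoBx}. It therefore remains to show that (i) and (ii) are impossible.

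Suppose (i) holds, so $\Phi(F)=x\otimes h(F)$ for every $F\in\tzh$, with $x$ a fixed nonzero vector. I will combine this with Lemma \ref{A_U.spectral}.

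\begin{enumerate}[\rm (a)]
\item Fix $F\in\tzh$ with $F\neq0$ and any $U\in\UH$. Since $UFU^*\in\tzh$, we get
$$
x\otimes h(UFU^*)=\Phi(UFU^*)=A_U\bigl(x\otimes h(F)\bigr)A_U^{-1}=A_Ux\otimes (A_U^{-1})^*h(F).
$$
\item Comparing ranges, $A_Ux\in\C x$ for every $U\in\UH$. Hence, for every $U$, the map $X\mapsto A_U\Phi(X)A_U^{-1}$ preserves the set $\{x\otimes z: z\in\H\}$.
\item Consequently the subspace $\mathcal L:=\Phi^{-1}\bigl(\{x\otimes z:z\in\H\}\bigr)$ is invariant under every unitary similarity $X\mapsto UXU^*$. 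By (i), $\mathcal L$ contains $\tzh$.
\item On the other hand, $\{x\otimes z:z\in\H\}$ contains no nonzero operator of the form $\lambda I_{\H}+x\otimes z$ with $\lambda\neq0$, because such an operator is not of rank one in infinite dimension.
\end{enumerate}

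To reach a contradiction from this, the most concrete route is to use that $\Phi$ is surjective. Since every element of $\Phi(\tzh)$ has range in $\C x$, the codimension count matters. I would consider the unitarily invariant subspace $\Phi^{-1}(x\otimes\H)$. Unitary invariance forces it to be large, for instance it contains all finite-rank operators once it contains a single nonzero one that is not scalar, by the standard span argument. Then $\Phi$ maps $\FH$ into the rank-$\le1$ set $x\otimes\H$, and by injectivity and linearity this must also be checked against rank-one projections.

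The cleanest contradiction uses nilpotent rank-two operators. Take $R\in\NHO$ and a unitary $U$ such that $R+URU^*$ has rank two and is nilpotent of square zero. By (i), $\Phi(R+URU^*)=x\otimes(h(R)+h(URU^*))$. However, $R+URU^*$ is itself a sum of elements of $\NHO$ in two genuinely different ways, with ranges spanning a two-dimensional space. Transporting by $\Phi\circ\mathrm{Ad}_V\circ\Phi^{-1}=\mathrm{Ad}_{A_V}$ for suitable $V$, and using that $A_V$ fixes the line $\C x$ for all $V$, I would show that this contradicts injectivity.

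More simply: by (b), every $A_V$ fixes the line $\C x$. Choose $V$ with $VRV^*=R^*$-like position. Rather than pursue this, I expect the most efficient contradiction to be the following. Pick $F\in\tzh$ and note that $\Phi(F)$ and $\Phi(UFU^*)$ are similar via $A_U$, which fixes $x$. Then apply Lemma \ref{A_U.spectral} to $X=\Phi^{-1}(y\otimes z)$ for some $y\notin\C x$ to transport range directions and force $y\in\C x$.

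The genuine obstacle is making this last step rigorous. One has to show that the family $\{A_U\}$ cannot all fix one line while $\Phi$ remains bijective. My intended argument is as follows. Let $\mathcal M=\Phi^{-1}(\{x\otimes z\})$. This is a proper subspace, since it does not contain $\Phi^{-1}(I_{\H})\in\C I_{\H}$ by Remark \ref{scalar}. It contains $\tzh$ and is invariant under all unitary conjugations. Choose $Y=\Phi^{-1}(y\otimes z)$ with $y\notin\C x$, and analyze $\Phi(Y+F)$ for $F\in\tzh$ using the fact that $\Phi$ preserves rank-one nilpotents: take $Y+F\in\NHO$ whenever this is possible.

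Case (ii) is handled symmetrically, either via adjoints or by applying the same argument to ranges of adjoints, with $(A_U^{-1})^*x\in\C x$ playing the role of $A_Ux\in\C x$.
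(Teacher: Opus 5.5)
Your reduction is sound: $\Phi$ is injective, Lemma \ref{rank-one.preserving} gives $\Phi(\NHO)\subseteq\NHO$, and alternatives (iii) and (iv) of Remark \ref{structure.preservers.NHO} are exactly \eqref{AxoBy} and \eqref{AyoBx}. Your observations that $A_Ux\in\C x$ and that $\mathcal L=\Phi^{-1}(\{x\otimes u:u\in\H\})$ is unitarily invariant are also correct. But you never actually exclude alternative (i), and that is the whole content of the lemma. Each of your sketches is either abandoned or left at ``I would show'', and you concede that the last step is not rigorous.

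Your final plan is in fact vacuous. Since $\Phi(Y)=y\otimes z$ with $y\notin\C x$, we have $Y\notin\mathcal L\supseteq\tzh$, so $Y+F$ never lies in $\NHO\subseteq\tzh$ for $F\in\tzh$. The auxiliary claim is false: a unitarily invariant subspace containing one nonzero non-scalar operator need not contain $\FH$. $\tzh$ itself is a counterexample, since the trace is invariant under unitary conjugation. More generally, arguments that stay inside $\tzh$ and $\mathcal L$, where $\Phi$ just looks like an injective map into $\{x\otimes u:u\in\H\}$, give you no visible handle on a contradiction.

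The missing idea is to use the reflections $U=I_{\H}-2P$, with $P$ a finite-rank orthogonal projection. For these, $UTU^*-T=-2\bigl(PT(I_{\H}-P)+(I_{\H}-P)TP\bigr)\in\tzh$ for \emph{every} $T\in\BH$. Apply this to an operator outside $\mathcal L$, namely $T=\Phi^{-1}(z\otimes x)$ with $z\notin\C x$.

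\begin{enumerate}[\rm (1)]
\item By Lemma \ref{A_U.spectral}, $\Phi(UTU^*)$ is similar to $z\otimes x$, hence has rank one.
\item Alternative (i) gives $\Phi(UTU^*)=z\otimes x+x\otimes w$ with $w=h(UTU^*-T)\in\{x\}^\perp$.
\item Since $z$ and $x$ are linearly independent, rank one forces $w\in\C x\cap\{x\}^\perp=\{0\}$.
\item Hence $\Phi(UTU^*-T)=0$, so $UTU^*=T$, i.e.\ $PT=TP$ for every finite-rank projection $P$, and therefore $T\in\C I_{\H}$.
\end{enumerate}

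This contradicts $\Phi(\C I_{\H})=\C I_{\H}$ (Remark \ref{scalar}), because $\Phi(T)$ has rank one.

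Taking $x$ as the second factor matters. With a general $\Phi(Y)=y\otimes z$ as in your last paragraph, rank one only gives $h(UYU^*-Y)\in\C z$. Combined with $h(\cdot)\perp x$, this yields zero only when $\langle z,x\rangle\neq0$. Alternative (ii) is handled symmetrically using $x\otimes z$.
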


\begin{proof}
Assume for a contradiction that $\Phi$ satisfies neither \eqref{AxoBy} nor \eqref{AyoBx}. Then, by Remark \ref{structure.preservers.NHO}, there exist a nonzero vector $x\in\H$ and a map $h:\tzh\to \{x\}^{\perp}$ such that either
$$
\Phi(F)=x\otimes h(F)\qquad\text{for every }F\in\tzh
$$
or
\begin{equation}\label{h(F)ox}
\Phi(F)=h(F)\otimes x\qquad\text{for every }F\in\tzh.    
\end{equation}
Assume that the former case holds, let $z\in\H$ be linearly independent of $x$, and let $T\in\BH$ be such that $\Phi(T)=z\otimes x$.
Let $P$ be a finite-rank orthogonal projection and put $U=I_{\H}-2P$.
Then $U$ is unitary and $UTU^*\simu T$. Hence $\Phi(UTU^*)\sim \Phi(T)=z\otimes x$. In particular, $\rank \Phi(UTU^*)=1$.

Set $N=UTU^*-T$. Since $N=-2\big(PT(I_{\H}-P)+(I_{\H}-P)TP\big)$, we have $N\in\tzh$. Therefore, $\Phi(N)=x\otimes y$ where $y=h(N)\in \{x\}^\perp$. Thus
$$
\Phi(UTU^*)
=
z\otimes x+x\otimes y.
$$
Since $z$ and $x$ are linearly independent and $\rank \Phi(UTU^*)=1$, the vectors
$x$ and $y$ must be linearly dependent. But $y\in\{ x \}^{\perp}$; so $y=0$. Thus $\Phi(N)=0$, and therefore $N=0$. Hence
$
(I_{\H}-2P)T(I_{\H}-2P)^*=T;
$
equivalently $PT=TP$. Since $P$ was arbitrary, we conclude that $T\in\C I_{\H}$. But by Remark \ref{scalar}, we have $\Phi(\C I_{\H})=\C I_{\H}$, contradicting $\rank \Phi(T)=1$.

The case \eqref{h(F)ox} is treated similarly by considering the rank-one operator $x\otimes z$ instead of $z\otimes x$.
\end{proof}

\begin{lemma}\label{rank.preserving}
The map $\Phi$ preserves the rank on $\tzh$.
\end{lemma}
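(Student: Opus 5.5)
We need to show that $\rank\Phi(F)=\rank F$ for every $F\in\tzh$. By Remark \ref{phi.tzh}, $\Phi(F)\in\tzh$ has finite rank, so only the equality of ranks is at issue. The plan is to use Lemma \ref{A.and.B} to write $\Phi$ explicitly on $\tzh$, and then compute ranks directly.

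Assume first that \eqref{AxoBy} holds. Every $F\in\tzh$ is a finite sum of nilpotent rank-one operators. So by linearity, writing $F=\sum_i x_i\otimes y_i$ with each $x_i\otimes y_i\in\NHO$, we get
$$\Phi(F)=\sum_i Ax_i\otimes By_i.$$
We cannot conclude directly from this, because a representation of $F$ by nilpotent terms need not be minimal, and $A,B$ are merely injective. The first step is therefore to extend \eqref{AxoBy} to all $x\otimes y$ with $x,y$ arbitrary but $\langle x,y\rangle$ possibly nonzero, at least modulo a trace term. This should be done by polarization-type identities. For orthogonal $x,y$ and any $z$, the operators $(x+z)\otimes y$ and $x\otimes y$ are both nilpotent when $z\perp y$. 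Comparing the two expansions of $\Phi$ on $(x_1+x_2)\otimes y$ with $x_1,x_2\perp y$ then gives the additivity of $A$ on $\{y\}^\perp$, and similarly for $B$. A standard argument, as in \v{S}emrl's paper, also yields that $A$ and $B$ are linear on all of $\H$ up to scalar factors, with the scalars absorbed.

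With this in hand, take $F\in\tzh$ of rank $k$. Choose a basis $x_1,\dots,x_k$ of $\ran F$ and write $F=\sum_{i=1}^k x_i\otimes y_i$ with $y_1,\dots,y_k$ linearly independent. Since $\tr F=0$, we can perturb this to a representation as a sum of elements of $\NHO$ together with a controlled trace-zero correction. More simply, the aim is to show that
$$\Phi(F)=\sum_{i=1}^k Ax_i\otimes By_i$$
holds for any such representation. For this we use the relation $\langle Ax,By\rangle=0$ whenever $\langle x,y\rangle=0$, which holds because $\Phi(x\otimes y)$ is nilpotent. That orthogonality relation forces $B^*A$ (in the appropriate sense) to be a scalar multiple $\mu$ of the identity on the relevant vectors, with $\mu$ constant. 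It follows that the bilinear extension $x\otimes y\mapsto Ax\otimes By$ is well defined on all finite-rank operators and agrees with $\Phi$ on $\tzh$.

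Once $\Phi(F)=\sum_{i=1}^k Ax_i\otimes By_i$ is established with the $x_i$ independent and the $y_i$ independent, injectivity of $A$ and $B$ gives that the families $Ax_i$ and $By_i$ are each linearly independent. Hence $\rank\Phi(F)=k$. The conjugate-linear case \eqref{AyoBx} is identical, with the roles of the two families swapped.

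The main obstacle is the well-definedness step, namely showing that the formula extends consistently from $\NHO$ to arbitrary rank-$k$ representations. The first thing to try there is the orthogonality relation $\langle Ax,By\rangle=\mu\langle x,y\rangle$, derived from nilpotency of $\Phi(x\otimes y)$ for $x\perp y$ combined with linearity in $x$ for fixed $y$.
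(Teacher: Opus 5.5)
Your closing count, a minimal representation $F=\sum_{i=1}^k x_i\otimes y_i$ followed by injectivity of $A$ and $B$, is fine. The gap is the step you yourself flag as the main obstacle: your justification for it is a non sequitur. The relation $\langle Ax,By\rangle=\mu\langle x,y\rangle$ can indeed be derived from nilpotency of $\Phi(x\otimes y)$. But it only describes how the extended map acts on traces. It says nothing about whether $\sum_i x_i\otimes y_i\mapsto\sum_i Ax_i\otimes By_i$ is independent of the chosen representation, so ``It follows that the bilinear extension is well defined'' does not follow from what precedes it.

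Your preliminary detour is also unnecessary. Lemma \ref{A.and.B} already gives $A,B$ linear (resp.\ conjugate-linear), so nothing needs to be recovered by polarization. Extending \eqref{AxoBy} to $x\otimes y$ with $\langle x,y\rangle\neq0$ ``modulo a trace term'' is not needed either, since only the restriction of $\Phi$ to $\tzh$ enters this lemma.

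The missing justification is short and comes from bilinearity. Suppose $\sum_{i=1}^m x_i\otimes y_i=0$. Choose a basis $g_1,\dots,g_r$ of $\spn\{y_1,\dots,y_m\}$ and write $y_i=\sum_l c_{il}g_l$. Then $\sum_l\bigl(\sum_i\overline{c_{il}}\,x_i\bigr)\otimes g_l=0$, and independence of the $g_l$ gives $\sum_i\overline{c_{il}}\,x_i=0$ for each $l$. Hence $\sum_i Ax_i\otimes By_i=\sum_l A\bigl(\sum_i\overline{c_{il}}\,x_i\bigr)\otimes Bg_l=0$. So $x\otimes y\mapsto Ax\otimes By$ extends to a well-defined linear map on $\FH$. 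It agrees with $\Phi$ on $\NHO$, hence on $\tzh=\spn\NHO$, and your minimal-representation argument then gives $\rank\Phi(F)=\rank F$. The case \eqref{AyoBx} is handled the same way.

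Once repaired, your route genuinely differs from the paper's. The paper avoids any well-definedness issue by using Fillmore's theorem to write $F=\sum_i u_i\otimes f_i$ with each $\langle u_i,f_i\rangle=0$ and the $f_i$ linearly independent. Then \eqref{AxoBy} applies term by term, and $\ran\Phi(F)=\spn\{Au_i\}$ can be read off directly. Your version dispenses with Fillmore's theorem in exchange for the elementary tensor-product check above.
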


\begin{proof}
Let $F\in\tzh$, and let us show that $\rank \Phi(F)=\rank F$. The equality is trivial if $F=0$, so assume that $F\neq0$. Since $F$ has finite rank, there exists a finite-dimensional
subspace $M\subseteq\H$ reducing $F$ such that
$$
F=\begin{bmatrix}
F_0 & 0\\
0 & 0
\end{bmatrix}
\begin{array}{l}
     M \\
     M^\perp 
\end{array}
$$
for some $F_0\in\mathcal{B}(M)$. Moreover, ${\rm Tr}(F_0)={\rm Tr}(F)=0.$
Then, according to \cite{Fillmore}, there exist an invertible operator
$S\in\mathcal B(M)$ and an orthonormal basis $\{e_1,\ldots,e_p\}$ of
$M$ such that $S^{-1}F_0S$ has zero diagonal entries with respect to this basis. For $i\in\{1,\ldots, p\}$, set
$v_i=S^{-1}F_0Se_i$. Then
$$
S^{-1}F_0S=\sum_{i=1}^p v_i\otimes e_i
$$
with
$
\langle v_i,e_i\rangle = \left\langle S^{-1}F_0S e_i,e_i\right\rangle=0.
$
Consequently,
\begin{equation}\label{nilpotent.representation}
F=\sum_{i=1}^p u_i\otimes f_i,
\end{equation}
where $u_i=Sv_i$ and $f_i=(S^{-1})^*e_i$. Note that, for every $i\in\{1,\ldots, p\}$,
\begin{equation}\label{nilpotent.representation2}
  \langle u_i,f_i\rangle
=
\langle Sv_i,(S^{-1})^*e_i\rangle
=
\langle v_i,e_i\rangle
=
0.
\end{equation}
Moreover, since $(S^{-1})^*$ is invertible, the vectors $f_1,\ldots,f_p$ are linearly independent. Therefore, elementary linear algebra shows that 
$\ran F= \spn\{u_1,\ldots, u_p\}$, and so
$$
\rank F=\dim\spn\{u_1,\ldots,u_p\}.
$$

Assume first that \eqref{AxoBy} holds. Then combining with \eqref{nilpotent.representation} and \eqref{nilpotent.representation2} gives $\Phi(F)=\sum_{i=1}^p Au_i\otimes Bf_i$. Since $B$ is injective, the vectors $Bf_1,\ldots,Bf_p$ are linearly independent, and therefore $\ran \Phi(F)=\spn\{Au_1,\ldots,Au_p \}$, implying
$$
\rank\Phi(F)=\dim\spn\{Au_1,\ldots,Au_p\}.
$$
Since $A$ is injective,
$$
\dim\spn\{Au_1,\ldots,Au_p\}
=
\dim\spn\{u_1,\ldots,u_p\}
=
\rank F.
$$
Thus $\rank\Phi(F)=\rank F$.

Now, if \eqref{AyoBx} holds, then
$$
\Phi(F)=\sum_{i=1}^p Af_i\otimes Bu_i,
$$
and, in this case, the vectors $Af_1,\ldots,Af_p$ are linearly independent, and the same argument applied to $\Phi(F)^*$
gives
\begin{align*}
\rank\Phi(F)=\rank\Phi(F)^*
&=
\dim\spn\{Bu_1,\ldots,Bu_p\}\\
&=
\dim\spn\{u_1,\ldots,u_p\}
=
\rank F.    
\end{align*}
Hence $\Phi$ preserves the rank on $\tzh$.
\end{proof}

\begin{lemma}\label{standard.form.F0}
There exist an invertible operator $A\in\BH$ and a nonzero $c\in\C$ such that either
\begin{equation}\label{form1}
   \Phi(F)=c AFA^{-1}\quad\text{for every }F\in\tzh 
\end{equation}
or
\begin{equation}\label{form2}
\Phi(F)=c AF^{\tr}A^{-1}\quad\text{for every }F\in\tzh.    
\end{equation}
\end{lemma}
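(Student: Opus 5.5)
We work with the two alternatives of Lemma \ref{A.and.B} in turn, treating \eqref{AxoBy} first. In that case $A,B:\H\to\H$ are injective linear maps with $\Phi(x\otimes y)=Ax\otimes By$ whenever $\langle x,y\rangle=0$. The plan has four steps.

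\textbf{Step 1: turn the two maps into one map and its adjoint.} Take the decomposition \eqref{nilpotent.representation} used in the proof of Lemma \ref{rank.preserving}. Applied to an arbitrary $F\in\tzh$, it gives
$$
\Phi(F)=\sum_i Au_i\otimes Bf_i .
$$
Now fix $F$ and conjugate by a unitary $U$. By Lemma \ref{A_U.spectral} the operators $\Phi(UFU^*)$ and $\Phi(F)$ are similar, so in particular they have the same trace. Taking $F=x\otimes y$ with $\langle x,y\rangle=0$ already shows $\langle Ax,By\rangle=0$, since $\Phi(F)$ is nilpotent by Lemma \ref{rank-one.preserving}.

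The key identity is obtained by applying trace preservation to the image of a trace-zero $F$ that is not nilpotent. Take
$$
F=x\otimes x-y\otimes y\qquad\text{with } \|x\|=\|y\|=1,\ x\perp y .
$$
Express $F$ as a sum of nilpotent rank-ones, for instance
$$
F=\tfrac12\bigl((x+y)\otimes(x-y)+(x-y)\otimes(x+y)\bigr).
$$
Comparing the trace of $\Phi(F)$ with the trace of $\Phi$ of the unitarily similar operator $y\otimes y-x\otimes x=-F$ forces ${\rm Tr}\,\Phi(F)=0$. Combining this with the nilpotent relations should give the following: the sesquilinear form $(x,y)\mapsto\langle Ax,By\rangle$ vanishes whenever $x\perp y$, and it takes equal values on orthonormal pairs.

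\textbf{Step 2: get $B^*A=cI$.} A sesquilinear form that vanishes on orthogonal pairs is a scalar multiple of the inner product. This is the standard polarization argument: vanishing on orthogonal pairs forces $\langle Ax,Bx\rangle=c\|x\|^2$. Hence
$$
\langle Ax,By\rangle=c\langle x,y\rangle\qquad\text{for all }x,y .
$$
This is where the real work lies. At this point we only know that $A$ and $B$ are injective and linear, not that they are bounded, so $B^*$ is not yet available. The identity must therefore first be read purely algebraically, as "$B$ is the $c$-dual of $A$".

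\textbf{Step 3: boundedness and invertibility of $A$.} For boundedness I would use a closed graph argument. If $x_n\to0$ and $Ax_n\to z$, then for every $y$ we get $\langle Ax_n,By\rangle=c\langle x_n,y\rangle\to0$, so $\langle z,By\rangle=0$. This gives $A$ bounded provided $\ran B$ is dense. Density should come from surjectivity of $\Phi$: arguing with Lemma \ref{rank.preserving} applied to $\Phi^{-1}$ (which satisfies the same hypothesis), every nilpotent rank-one operator lies in the image of $\Phi(\NHO)$, so $\ran A$ and $\ran B$ are all of $\H$. Hence $A$ is bijective, and bounded by closed graph. The identity of Step 2 then gives $B=\bar c(A^{-1})^*$, and so
$$
\Phi(x\otimes y)=Ax\otimes \bar c(A^{-1})^*y=c\,A(x\otimes y)A^{-1}.
$$
Since $\tzh$ is spanned by $\NHO$, linearity yields \eqref{form1}.

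\textbf{Step 4: the conjugate-linear case.} Under \eqref{AyoBx}, with $A,B$ conjugate-linear, the same steps apply after composing with the transpose. Let $J$ be the conjugation associated with the fixed orthonormal basis, so that $(x\otimes y)^{\tr}=Jy\otimes Jx$. Setting $A'=AJ$ and $B'=BJ$ gives linear maps, and
$$
\Phi(F)=\Psi(F^{\tr}),\qquad \Psi(u\otimes v)=A'u\otimes B'v .
$$
Here $\Psi$ is linear, and the unitary-similarity invariance transfers because $(UFU^*)^{\tr}=\bar U F^{\tr}\bar U^{*}$. Steps 1--3 applied to $\Psi$ then yield \eqref{form2}.

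The main obstacle is Step 2: extracting $\langle Ax,By\rangle=c\langle x,y\rangle$, and in particular a constant $c$ that is the same for all pairs, from the similarity-invariance of $\Phi$. A possible alternative route is to use the fact that $\Phi$ maps the square-zero rank-ones $x\otimes y$ (with $\langle x,y\rangle=0$) into operators of the same kind, and to check that this is sufficient.
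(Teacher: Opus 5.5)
There is a genuine gap in Step 3, where you conclude that $\ran A$ and $\ran B$ are all of $\H$. You justify this by saying that $\Phi^{-1}$ satisfies the same hypothesis, but nothing of the sort is available. The hypothesis is one-directional ($T\simu S\Rightarrow r(\Phi(T))=r(\Phi(S))$). Lemma \ref{A_U.spectral} only gives $\Phi^{-1}(A_UYA_U^{-1})=U\Phi^{-1}(Y)U^*$ for the particular, uncontrolled invertibles $A_U$; it says nothing about $\Phi^{-1}(WYW^*)$ for an arbitrary unitary $W$. So Lemmas \ref{rank-one.preserving}, \ref{A.and.B} and \ref{rank.preserving} cannot be applied to $\Phi^{-1}$, and the inclusion $\NHO\subseteq\Phi(\NHO)$ is exactly what remains to be proved.

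This is not a technicality. Take a non-surjective isometry $V$ and the map $X\mapsto VXV^*$. It is injective and preserves unitary similarity (use $W=VUV^*+I_{\H}-VV^*$). It has the form \eqref{AxoBy} with $A=B=V$ and passes your Steps 1--2 with $c=1$, yet $\ran A$ is not dense. Without a real use of the surjectivity of $\Phi$ on $\BH$, your closed-graph argument has nothing to stand on, and neither boundedness nor invertibility of $A$ follows.

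The paper spends essentially its whole proof on this missing point. For a rank-one $R$ it sets $T=\Phi^{-1}(R)$ and takes $U=I_{\H}-2P$ with $P$ a finite-rank projection. Then $N=UTU^*-T\in\tzh$, and $\Phi(N)=\Phi(UTU^*)-R$ has rank at most two because $\Phi(UTU^*)\sim R$. Lemma \ref{rank.preserving}, applied to $\Phi$ itself, gives $\rank N\le 2$, and Proposition \ref{rank.proposition} then yields $T\in\FH+\C I_{\H}$. Hence $\Phi^{-1}(\FH+\C I_{\H})\subseteq\FH+\C I_{\H}$, and \v{S}emrl's codimension argument gives $\Phi(\tzh)=\tzh$. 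Therefore $\Phi(\NHO)=\NHO$ and $A,B$ are surjective.

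Once this is in place, your Steps 2--4 are a valid, self-contained replacement for the citation of \v{S}emrl's Remark 3.2. Step 2 is pure linear algebra: vanishing of $\langle Ax,By\rangle$ on orthogonal pairs alone forces $\langle Ax,By\rangle=c\langle x,y\rangle$, with $c\neq0$ once $A,B$ are onto. So the trace computation with $x\otimes x-y\otimes y$ is superfluous, and the ``main obstacle'' you flag is not the real one.
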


\begin{proof}
    First, we show that $\Phi(\tzh)=\tzh$. We already know that
    $$
    \Phi(\tzh)\subseteq\tzh.
    $$
    Let $R\in\BH$ be a rank-one operator and set
$T=\Phi^{-1}(R)$. Let $P\in\FH$ be an orthogonal projection and put
$U=I_{\H}-2P$. Then, $UTU^*\simu T$, and so $\Phi(UTU^*)\sim \Phi(T)=R$. Hence, $\Phi(UTU^*)$ has rank one. Set $N=UTU^*-T$. Then,
$$
N=-2
\begin{bmatrix}
0&PT_{|\ran(I_{\H}-P)}\\
(I_{\H}-P)T_{|\ran P}&0
\end{bmatrix}
\begin{array}{l}
     \ran P \\
     \ran (I_{\H}-P) 
\end{array},
$$
and so $N\in\tzh$ and
$$
\rank N =\rank PT(I_{\H}-P) + \rank (I_{\H}-P)TP.
$$
Note that $\Phi(N)=\Phi(UTU^*)-\Phi(T)$ has rank at most $2$. By Lemma \ref{rank.preserving}, $\rank N\le2$. Since $P$ was arbitrary, Proposition \ref{rank.proposition} gives $\Phi^{-1}(R)=T\in \FH+\C I_{\H}$. Since rank-one operators span $\FH$ and $\Phi(\C I_{\H})=\C I_{\H}$, we get 
$$
\Phi^{-1}(\FH+\C I_{\H})\subseteq \FH+\C I_{\H}.
$$
We now use the codimension argument employed by \v{S}emrl
\cite[p.~80]{Semrl}. Consequently,
\begin{equation}\label{vector.space.sequence}
\Phi(\tzh)\subseteq \tzh \subset \FH \subset \FH+\C I_{\H} \subseteq \Phi(\FH+\C I_{\H}).
\end{equation}
Since $\tzh$ has codimension one in $\FH$ and $\FH$ has codimension one in $\FH+\C I_{\H}$, bijectivity of $\Phi$ shows that $\Phi(\tzh)$ has codimension two in $\Phi(\FH+\C I_{\H})$. Therefore, the inclusions in \eqref{vector.space.sequence} force $\Phi(\tzh)=\tzh$.

\medskip

Now, let $A$ and $B$ be the injective maps obtained in Lemma \ref{A.and.B}. Since $\Phi(\tzh)=\tzh$, $\Phi$ preserves the rank on $\tzh$, and taking into account that the set of rank-one operators in $\tzh$ is exactly $\NHO$, we have $\Phi\left(\NHO \right)=\NHO$. Hence, one can easily see from \eqref{AxoBy} and \eqref{AyoBx} that $A$ and $B$ are also surjective. Consequently, by \cite[Remark 3.2]{Semrl} (see also \cite[Remark 2.7]{LuPeng}), the map $\Phi:\tzh\to\tzh$ has one of the forms in \eqref{form1} and \eqref{form2}.
\end{proof}

\begin{proof}[Proof of Theorem \ref{main.thm.2}]
(ii)$\implies$(iii). We treat only the case \eqref{form1} and we will show that $\Phi$ has the form in \eqref{standard.form1.spectral}. In the other case \eqref{form2}, we replace $\Phi$ with $X\mapsto \Phi(X)^{\tr}$ which still satisfies condition (ii) of Theorem \ref{main.thm.2} and satisfies \eqref{form1}, implying that $\Phi$ has the form in \eqref{standard.form2.spectral}.

Define a map $\Psi:\BH\to\BH$ by $\Psi(X)=c^{-1}A^{-1}\Phi(X)A$. We will show that $\Psi=I_{\BH}$ which implies that $\Phi$ has the form in \eqref{standard.form1.spectral}. Note that
\begin{equation*}
    \Psi(F)=F \quad\text{for every } F\in\tzh.
\end{equation*}
Moreover, if $U\in\UH$, then Lemma \ref{A_U.spectral} gives, for every $X\in\BH$, 
\begin{equation}\label{S_U}
\Psi(UXU^*)=c^{-1}A^{-1}\Phi(UXU^*)A=c^{-1}A^{-1}A_U\Phi(X)A_U^{-1}A=S_U\Psi(X)S_U^{-1}
\end{equation}
where $S_U=A^{-1}A_UA$. Then, for every $F\in\tzh$, we have
$$
UFU^*=\Psi(UFU^*)=S_U F S_U^{-1};
$$
that is $S_U^{-1}UF=FS_U^{-1}U$. Since the commutant of $\tzh$ is $\C I_{\H}$, we get $S_U^{-1}U=\alpha_U I$ for some nonzero $\alpha_U\in\C$; that is $S_U=\alpha_U^{-1} U$. Substituting in \eqref{S_U}, we obtain
\begin{equation}\label{Psi.unitary.similarity}
\Psi(UXU^*)=U\Psi(X)U^*\quad\text{for every }X\in\BH.    
\end{equation}

Next, we show that $\Psi=I_{\BH}$. For convenience, define a map $\Gamma:\BH\to\BH$ by $\Gamma(X)=\Psi(X)-X$. Clearly, $\Gamma$ vanishes on $\tzh$ and, for all $U\in\UH$ and $X\in\BH$, 
\begin{equation}\label{Gamma.unitary.similarity.invariant}
\Gamma(UXU^*)=\Psi(UXU^*)-UXU^*=U\Psi(X)U^*-UXU^*=U\Gamma(X)U^*.
\end{equation}
Let $P\in\BH$ be a finite-rank orthogonal projection, and set $V=I_{\H}-2P$. Then $V$ is unitary and, for every $X\in\BH$, the operator $VXV^*-X$ belongs to $\tzh$. It follows that
$$
\Gamma(VXV^*)=\Gamma(X)\quad\text{for every $X\in\BH$},
$$
and hence combining with \eqref{Gamma.unitary.similarity.invariant} gives $V\Gamma(X)=\Gamma(X)V$; equivalently, $P\Gamma(X)=\Gamma(X)P$. Since $P$ was arbitrary, $\Gamma(X)\in\C I_{\H}$, and therefore there exists a linear functional $f:\BH\to\C$ such that $\Gamma(X)=f(X) I_{\H}$ for every $X\in\BH$. Moreover, it follows from \eqref{Gamma.unitary.similarity.invariant} that
$$
f(UXU^*)=f(X) \qquad\text{for all $U\in\UH$ and $X\in\BH$};
$$
equivalently,
$
f(UX)=f(XU).
$
Since $\BH$ is spanned by $\UH$, we obtain
$$
f(TX-XT)=0 \quad\text{for all $T,X\in\BH$}.
$$
Since every operator in $\BH$ is a finite sum of commutators (i.e., operators of the form $TS-ST$) \cite{Halmos} (see also \cite[Remark 3.3]{LuPeng}), the functional $f$ is identically zero. Consequently, $\Psi=I_{\BH}$, as desired.
\end{proof}

\section{Proof of Theorems \ref{main.thm.3} and \ref{main.thm.4}}

Throughout this section, $\Phi:\BH\to\BH$ is a bijective linear map satisfying
$$
T\simu S\implies \w(\Phi(T))=\w(\Phi(S))
\qquad\text{for all }T,S\in\BH.
$$

\begin{lemma}\label{A_U.numerical}
For every $U\in\UH$, there exists $W_U\in\UH$ such that
$$
\Phi(UXU^*)=W_U\Phi(X)W_U^*
\qquad\text{for every }X\in\BH.
$$
\end{lemma}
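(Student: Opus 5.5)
I will mimic the proof of Lemma \ref{A_U.spectral}, replacing the spectral radius preserver theorem of Bre\v{s}ar--\v{S}emrl by the classical description of bijective linear numerical radius preservers on $\BH$. Fix $U\in\UH$, choose $V\in\UH$ with $V^2=U$, and set $\varphi_V(X)=\Phi\bigl(V\Phi^{-1}(X)V^*\bigr)$. This map is linear and bijective. Since $V\Phi^{-1}(X)V^*\simu\Phi^{-1}(X)$, it satisfies $w(\varphi_V(X))=w(X)$ for every $X\in\BH$. By the known structure of linear surjections preserving the numerical radius (Li in finite dimension; Chan/Li–Tsing-type results for $\BH$ in general, which do not need separability), there exist $W_V\in\UH$ and a unimodular $\alpha_V$ such that either
$$
\Phi(VXV^*)=\alpha_VW_V\Phi(X)W_V^*\quad\text{for every }X\in\BH,
$$
or the same identity holds with $\Phi(X)$ replaced by $\Phi(X)^{\tr}$.

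The forms involving $X^*$ or $\overline{X}$ are excluded because $\varphi_V$ is complex-linear. If the numerical radius preserver theorem in the needed generality also allows a form like $\alpha W X W^*$ composed with something degenerate, I will cite the version for bijective linear maps, where only these two standard forms occur.

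Next I apply the identity twice, exactly as in Lemma \ref{A_U.spectral}. In the first case I get $\Phi(UXU^*)=\alpha_V^2W_V^2\Phi(X)(W_V^2)^*$. In the transpose case I get
$$
\Phi(UXU^*)=\alpha_V^2\,W_V(W_V^*)^{\tr}\Phi(X)W_V^{\tr}W_V^*.
$$
Here $W_V^{\tr}$ is unitary, since transposition with respect to a fixed orthonormal basis preserves unitarity: $(W^{\tr})^*=\overline{W}$ and $W^{\tr}\overline{W}=(W^*W)^{\tr}=I$. Hence $(W_V^*)^{\tr}=(W_V^{\tr})^*$, and $W_U:=W_V(W_V^{\tr})^*$ is unitary with $W_U^*=W_V^{\tr}W_V^*$. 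In both cases this gives
$$
\Phi(UXU^*)=\beta_UW_U\Phi(X)W_U^*,\qquad |\beta_U|=1,\quad W_U\in\UH.
$$
This is the reason for passing through the square root $V$: the transpose form disappears after composing twice.

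Finally I will show $\beta_U=1$. Put $T=\Phi^{-1}(I_{\H})$. Then $\Phi(UTU^*)=\beta_UI_{\H}=\Phi(\beta_UT)$, so injectivity gives $UTU^*=\beta_UT$ for all $U\in\UH$. Proposition \ref{T.scalar} then yields $T\in\C I_{\H}$, with $T\neq0$ since $\Phi$ is injective. Hence $T=UTU^*=\beta_UT$, which forces $\beta_U=1$.

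I expect the main obstacle to be the first step: invoking a numerical-radius-preserver theorem valid for arbitrary (possibly nonseparable) infinite-dimensional $\H$ with only bijectivity and linearity assumed. If such a reference is unavailable, I would first try to derive it by showing that $\varphi_V$ maps the unit ball of $w$ onto itself, so it is a bijective isometry of $(\BH,w)$. I would then use the extreme-point or $\theta$-structure arguments standard in that literature, which do not rely on separability.
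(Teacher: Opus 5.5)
Your proposal is correct and follows the paper's proof of Lemma~\ref{A_U.numerical} step by step. Like the paper, you take a unitary square root $V$, apply Chan's theorem to the numerical-radius isometry $\varphi_V$, and compose twice to absorb the transpose. You then use $T=\Phi^{-1}(I_{\H})$ with Proposition~\ref{T.scalar} to force $\beta_U=1$. Your explicit check that $W_V(W_V^{\tr})^*$ is unitary fills in a detail the paper leaves implicit.
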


\begin{proof}
Fix $U\in\UH$, and choose $V\in\UH$ such that $V^2=U$. Define
$\varphi_V:\BH\to\BH$ by
$$
\varphi_V(X)=\Phi\bigl(V\Phi^{-1}(X)V^*\bigr).
$$
Then $\varphi_V$ is a bijective linear map satisfying
$$
w(\varphi_V(X))=w(X)
\qquad\text{for every }X\in\BH.
$$
By \cite{Chan}, there exist a unitary operator $S_V\in\UH$ and a
unimodular scalar $\alpha_V\in\C$ such that either
$$
\varphi_V(X)=\alpha_VS_VXS_V^*
\qquad\text{for every }X\in\BH,
$$
or
$$
\varphi_V(X)=\alpha_VS_VX^{\tr}S_V^*
\qquad\text{for every }X\in\BH.
$$
Applying the corresponding identity twice, as in the proof of
Lemma \ref{A_U.spectral}, we obtain a unitary operator $W_U\in\UH$
and a unimodular scalar $\beta_U\in\C$ such that
$$
\Phi(UXU^*)=\beta_UW_U\Phi(X)W_U^*
\qquad\text{for every }X\in\BH.
$$
Finally, the argument used at the end of the proof of
Lemma \ref{A_U.spectral}, applied to $T=\Phi^{-1}(I_{\H})$, yields
$\beta_U=1$. Hence
$$
\Phi(UXU^*)=W_U\Phi(X)W_U^*
\qquad\text{for every }X\in\BH.
$$
\end{proof}

Next, we prove Theorem \ref{main.thm.3}.

\begin{proof}[Proof of Theorem \ref{main.thm.3}]
{\rm (ii)}$\implies${\rm (iii)}. By Lemma
\ref{A_U.numerical}, $\Phi$ preserves unitary similarity. The desired conclusion now follows from \cite[Theorem 3.6]{HornLiTsing}, together with the bijectivity of $\Phi$.
\end{proof}

We conclude with the proof of Theorem \ref{main.thm.4}.

\begin{proof}[Proof of Theorem \ref{main.thm.4}]
{\rm (ii)}$\implies${\rm (iii)}. Since $\Phi$ preserves unitary similarity, it satisfies
$$
T\simu S\implies \Phi(T)\sim\Phi(S)
\qquad\text{for all }T,S\in\BH.
$$
Hence, by Theorem \ref{main.thm.2}, there exist an invertible operator
$A\in\BH$ and a nonzero scalar $c\in\C$ such that either
\begin{equation}\label{cAXA-1}
\Phi(X)=cAXA^{-1}
\qquad\text{for every }X\in\BH,  
\end{equation}
or
\begin{equation}\label{cAXtA-1}
\Phi(X)=cAX^{\tr}A^{-1}
\qquad\text{for every }X\in\BH.  
\end{equation}
It remains to show that $A$ is a nonzero scalar multiple of a unitary
operator.

Fix $U\in\UH$. By Lemma \ref{A_U.numerical}, there exists
$W_U\in\UH$ such that
$$
\Phi(UXU^*)=W_U\Phi(X)W_U^*
\qquad\text{for every }X\in\BH.
$$

Assume first that \eqref{cAXA-1} holds. Then, for every $X\in\BH$,
$$
AUXU^*A^{-1}
=c^{-1}\Phi(UXU^*)=c^{-1}W_U\Phi(X)W_U^*=
W_UAXA^{-1}W_U^*,
$$
or equivalently,
$$
(AU)X(AU)^{-1}
=
(W_UA)X(W_UA)^{-1}
\qquad\text{for every }X\in\BH.
$$
It follows that $(W_UA)^{-1}AU$ commutes with every operator in
$\BH$. Therefore,
$$
AU=\alpha_UW_UA
$$
for some nonzero scalar $\alpha_U\in\C$. Since $U$ and $W_U$ are
unitary,
$$
\|A\|=\|AU\|
=|\alpha_U|\,\|W_UA\|
=|\alpha_U|\,\|A\|,
$$
and hence $|\alpha_U|=1$. Consequently,
$$
U^*A^*AU
=
(\overline{\alpha_U}A^*W_U^*)
(\alpha_UW_UA)
=
A^*A
$$
for every $U\in\UH$. Thus $A^*A$ commutes with every unitary operator
in $\BH$, and hence there exists $r>0$ such that $A^*A=r^2I_{\H}$; equivalently,
$\left(r^{-1}A\right)^*\left(r^{-1}A\right)=I_{\H}$.
Since $r^{-1}A$ is invertible, we also have $\left(r^{-1}A\right)\left(r^{-1}A\right)^*=I_{\H}$. Therefore, $A=rW$ for some $W\in\UH$.

Now assume that \eqref{cAXtA-1} holds. Then, for every $X\in\BH$,
$$
A(U^*)^{\tr}X^{\tr}U^{\tr}A^{-1}
=c^{-1}\Phi(UXU^*)=c^{-1}W_U\Phi(X)W_U^*=
W_UAX^{\tr}A^{-1}W_U^*.
$$
As $X^{\tr}$ ranges over $\BH$, it follows, as above, that
$$
A(U^*)^{\tr}=\alpha_UW_UA
$$
for some nonzero scalar $\alpha_U\in\C$. Again, taking norms gives
$|\alpha_U|=1$. Hence
$$
\bigl((U^*)^{\tr}\bigr)^*A^*A(U^*)^{\tr}=A^*A
\qquad\text{for every }U\in\UH.
$$
Since $U\mapsto (U^*)^{\tr}$ is a bijection of $\UH$ onto itself, $A^*A$ commutes with every
unitary operator in $\BH$. Thus $A^*A=r^2I_{\H}$
for some $r>0$, and, as before, $A=rW$ for some $W\in\UH$.
\end{proof}


\begin{thebibliography}{99}

\bibitem{AmaraOudghiri}
Z. Amara and M. Oudghiri,
{\it Spectral function preservation by bijective linear maps on unitarily similar operators},
J. Math. Anal. Appl. {\bf 557} (2026), 130330.

\bibitem{BourhimMabrouk}
A. Bourhim and M. Mabrouk,
{\it Unitary similarity and the numerical radius preservers},
Linear Algebra Appl. {\bf 714} (2025), 15--27.

\bibitem{BresarSemrl}
M. Bre\v{s}ar and P. \v{S}emrl,
{\it Linear maps preserving the spectral radius},
J. Funct. Anal. {\bf 142} (1996), no. 2, 360--368.

\bibitem{Chan}
J.-T. Chan,
{\it Numerical radius preserving operators on $B(H)$},
Proc. Amer. Math. Soc. {\bf 123} (1995), 1437--1439.

\bibitem{Costara}
C. Costara,
{\it Linear bijective maps preserving spectral functions on pairs of similar operators},
J. Math. Anal. Appl. {\bf 530} (2024), no. 2, 127683.

\bibitem{DavidsonMarcoux}
K. R. Davidson and L. W. Marcoux,
{\it Linear spans of unitary and similarity orbits of a Hilbert space operator},
J. Operator Theory {\bf 52} (2004), 113--132.

\bibitem{Fillmore}
P. A. Fillmore,
{\it On similarity and the diagonal of a matrix},
Amer. Math. Monthly {\bf 76} (1969), no. 2, 167--169.

\bibitem{Halmos}
P. R. Halmos,
{\it Commutators of operators},
Amer. J. Math. {\bf 74} (1952), no. 1,  237--240.

\bibitem{Hiai}
F. Hiai,
{\it Similarity preserving linear maps on matrices},
Linear Algebra Appl. {\bf 97} (1987), 127--139.

\bibitem{HornLiTsing}
R. A. Horn, C.-K. Li and N.-K. Tsing,
{\it Linear operators preserving certain equivalence relations on matrices},
SIAM J. Matrix Anal. Appl. {\bf 12} (1991), no. 2, 195--204.

\bibitem{KarderPetekTaghavi}
M. Karder, T. Petek and A. Taghavi,
{\it Unitary similarity preserving linear maps on $B(H)$},
Integral Equations Operator Theory {\bf 82} (2015), 51--60.

\bibitem{LiPierce}
C.-K. Li and S. Pierce,
{\it Linear operators preserving similarity classes and related results},
Canad. Math. Bull. {\bf 37} (1994), 374--383.


\bibitem{Lim}
M. H. Lim,
{\it A note on similarity preserving linear maps on matrices},
Linear Algebra Appl. {\bf 190} (1993), 229--233.

\bibitem{LuPeng}
F. Lu and C. Peng,
{\it Similarity-preserving linear maps on $B(X)$},
Studia Math. {\bf 209} (2012), no. 1, 1--10.

\bibitem{Petek}
T. Petek,
{\it A note on unitary similarity preserving linear mappings on $B(H)$},
Linear Algebra Appl. {\bf 394} (2005), 217--224.

\bibitem{Semrl}
P. \v{S}emrl,
{\it Similarity preserving linear maps},
J. Operator Theory {\bf 60} (2008), 71--83.

\end{thebibliography}
\end{document}